\documentclass[a4paper,10pt]{article}
\usepackage{amsfonts,amsthm,amsmath,amssymb,amscd,graphicx,epsfig}
\usepackage{color}
\usepackage{subfigure}
\usepackage{xcolor}

\definecolor{Sepia}{rgb}{0.6,0.3,0.1}
\definecolor{RawSienna}{rgb}{0.2,0.7,0.1}
\definecolor{NavyBlue}{rgb}{0.2,0,0.5}
\definecolor{PineGreen}{rgb}{0,0.5,0}
\definecolor{Sepia1}{rgb}{0.9,0.0,0.1}
\definecolor{Kirpich}{rgb}{0.8,0.1,0.1}
\definecolor{Sepia2}{rgb}{0.4,0.,0.4}
\definecolor{Sepia3}{rgb}{0.4,0.,0.}
\definecolor{Sepia4}{rgb}{0.7,0.,0.7}
\definecolor{Sepia5}{rgb}{0.4,0.,0.7}
\textheight24cm \topmargin-2cm
\textwidth18cm \oddsidemargin-1.1cm\evensidemargin-1.1cm
\parskip 7pt

\theoremstyle{plain}
\newtheorem{thrm}{Theorem}

\newtheorem{prpstn}{Proposition}
\newtheorem{lmm}{Lemma}

\theoremstyle{definition}

\newtheorem{rmrk}{Remark}




\title{Bifurcations of cubic homoclinic tangencies\\ 
in two-dimensional symplectic maps
~\footnote{
This work has been partially supported by the
Russian Scientific Foundation Grant~14-41-00044, and the Spanish MINECO-FEDER
Grant~MTM2015-65717-P. The numerical experiments in Section~3 have been  carried out 
within the RSciF-grant (project~No.14-12-00811).
Author~MG 
has been partially supported by Juan de la Cierva-Formaci\'on Fellowship~FJCI-2014-21229. 
MG warmly thanks the Department of Mathematics
of Uppsala University for their hospitality and support;  
during her stay at Uppsala University, MG was also partially supported by 
the Knut and Alice Wallenberg Foundation grant~2013-0315.
Author~SG has been partially supported by the grant of RFBR No.~16-01-00324 {and 14-01-00344}.
Author~IO has been supported by the project M2 (Systematic multi-scale modelling and analysis for geophysical flow) of the Collaborative Research Centre TRR 181 ``Energy Transfer in Atmosphere and Ocean'' funded by the German Research Foundation.
}}
\author{M. Gonchenko$^1$, S.V. Gonchenko$^2$ and I. Ovsyannikov$^3$ \\
{\small $^1$ Departament de Mathem\`atiques i Inform\`atica, Universitat de Barcelona, Spain
}\\
{\small $^2$  N.I.~Lobachevsky Nizhny Novgorod
 University, Russia}\\
{\small $^3$  Fachbereich Mathematik und Informatik, Universit\"at Bremen, Germany}\\
{\small { \tt  gonchenko@ub.edu, \tt gonchenko@pochta.ru, \tt ivan.i.ovsyannikov@gmail.com}}
}
\date{}

\begin{document}

\maketitle

\begin{abstract}
We study bifurcations of cubic homoclinic tangencies in two-dimensional symplectic maps.
We distinguish two types of cubic homoclinic tangencies, and each type gives different first return maps
derived to diverse  conservative cubic H\'enon maps with quite different bifurcation diagrams.
In this way, we establish the  structure of bifurcations of periodic orbits in two parameter
general unfoldings generalizing to the conservative case the results previously obtained for the dissipative case.
We also consider the problem of 1:4 resonance for the  conservative cubic
H\'enon maps.
\end{abstract}

\section{Introduction and main results}
Homoclinic tangencies, i.e. tangencies between stable and unstable invariant manifolds of the same saddle periodic orbit,
play a very important role in the theory of dynamical chaos.
Starting with
the works by Smale and Shilnikov~\cite{Sma65,Shi67} (and in the ideological sense as far back as with the
 memoirs by Poincar\'e and Birkhoff~\cite{Poi90, Poi99,Bir35}),  the existence of transversal homoclinic 
intersections is considered as
the universal criterium of the complexity {of} dynamical systems.
At the same time, the  presence of non-transversal homoclinic orbits (homoclinic tangencies) indicates
an extraordinary richness of
bifurcations of such systems and, what is very important, {the principal} impossibility of providing of a
complete description of
bifurcations of such systems within framework of finite parameter families,~\cite{GST93a,GST99,Kal00,DN05,GST07}. Therefore, when
studying homoclinic bifurcations, the main problems are related to the analysis of their principal 
bifurcations and
characteristic properties of dynamics as a whole.

One of the important properties of homoclinic tangencies is that they can freely produce homoclinic and 
heteroclinic
tangencies of higher orders~\cite{GST93a,GST96,GST99,GST07}. In particular, in a two parameter family which 
unfolds generally a
quadratic homoclinic tangency, cubic homoclinic tangencies appear unavoidably.
This means that the study of bifurcations
of cubic homoclinic tangencies  itself becomes an important problem. Moreover, cubic homoclinic tangencies
can play an independent role when studying global bifurcations in many dynamical models. For example, such tangencies
appear naturally in the problem on periodically perturbed two-dimensional flows with a homoclinic 
figure-eight of
a saddle equilibrium~\cite{GSV13}, where the corresponding bifurcation points of codimension two 
give rise to the curves
of quadratic homoclinic tangencies forming (non-smooth) boundaries of homoclinic zones. Moreover, bifurcations
of cubic tangencies {lead to} the appearance of specific windows of stability (i.e. parameter domains corresponding to
the existence of  stable periodic orbits) which are well observable during  numerical explorations,~\cite{GSV13},
unlike the windows produced by quadratic homoclinic tangencies.

In fact, bifurcations of cubic homoclinic tangencies were studied first in~\cite{Gon85} for
the {sectionally} dissipative case (when a saddle periodic orbit 
has {eigenvalues} 
$\lambda_1,\ldots,\lambda_n, \gamma$ such that $|\gamma|>1>|\lambda| = \max\limits_i |\lambda_i|$ 
and $\sigma = |\lambda\gamma|<1$),
see also~\cite{GST96,GSV13, Tatjer}. Note that in~\cite{Gon85} the two main
cases were considered when the leading stable {eigenvalue} is real (i.e. $|\lambda_1|>|\lambda_i|, i=2,\ldots,n,$) 
and
the two leading {eigenvalues} are complex conjugate (i.e. $\lambda_{1,2}= \rho e^{\pm i\varphi}$,
$\varphi \neq 0, \pi$, and  $ \rho > |\lambda_i|, i=3,\ldots,n$).
The main attention in~\cite{Gon85} was focused on
the study of bifurcations of
\emph{single-round periodic orbits}, i.e. periodic orbits which  pass a neighborhood of the homoclinic orbit
only once.

We note that the study of such orbits
can be reduced to  an analysis of fixed points of the corresponding first return maps
defined near some point of the homoclinic tangency. In the case of cubic tangencies, such maps can be brought
to the maps close to the two-dimensional cubic H\'enon maps of the form
\begin{equation}
\mathbf{C}_\pm^{J}: \bar x = y, \;\;\; \bar y= M_1 + M_2 y - J x   \pm y^3,
\label{cubHM}
\end{equation}
being $M_1$ and $M_2$ parameters and $J$ the Jacobian.
Moreover, map~$\mathbf{C}_{-}^J$ appears in the case of cubic tangency 
{incoming from below} (Figure~\ref{fig2type}(a)) and map~$\mathbf{C}_{+}^J$ arises
in the case of cubic tangency
{incoming from above} (Figure~\ref{fig2type}(b)).
In the dissipative case $\sigma <1$, maps~(\ref{cubHM}) with $|J|\ll 1$  were derived 
(using the rescaling method,~\cite{TY86}) in~\cite{GST96}, where the corresponding formulas for the rescaled 
first return maps near homoclinic tangencies of arbitrary finite orders were obtained. 
It is worth mentioning that
maps~$\mathbf{C}_{+}^J$ and~$\mathbf{C}_{-}^J$  were studied in~\cite{GK88} for the
dissipative case~$|J|<1$, where bifurcation diagrams for fixed points and 2-periodic orbits were
constructed analytically
as well as strange attractors
were analyzed numerically.
Also maps~(\ref{cubHM}) are good approximations of the corresponding rescaling first return maps~$T_k$ when the
return time~$k$ is sufficiently
large (in fact,~$k$ is the period of a single-round periodic  orbit under consideration).
Then $(x,y)$ and $(M_1,M_2)$ are rescaled coordinates and parameters, respectively, which are defined in
domains $\|(x,y)\|<D_k$  and $\|(M_1,M_2)\|< P_k$, where $D_k,P_k\to + \infty$ as~$k\to\infty$, the Jacobian
$J$ is proportional to $\sigma^k$ (i.e. $|J|\ll 1$ in the dissipative case and~$J=1$ in the conservative case),  
and the error terms  are of order~$O(\lambda^k)$.

\begin{figure}[bt]
\centering
\includegraphics[height=5cm]{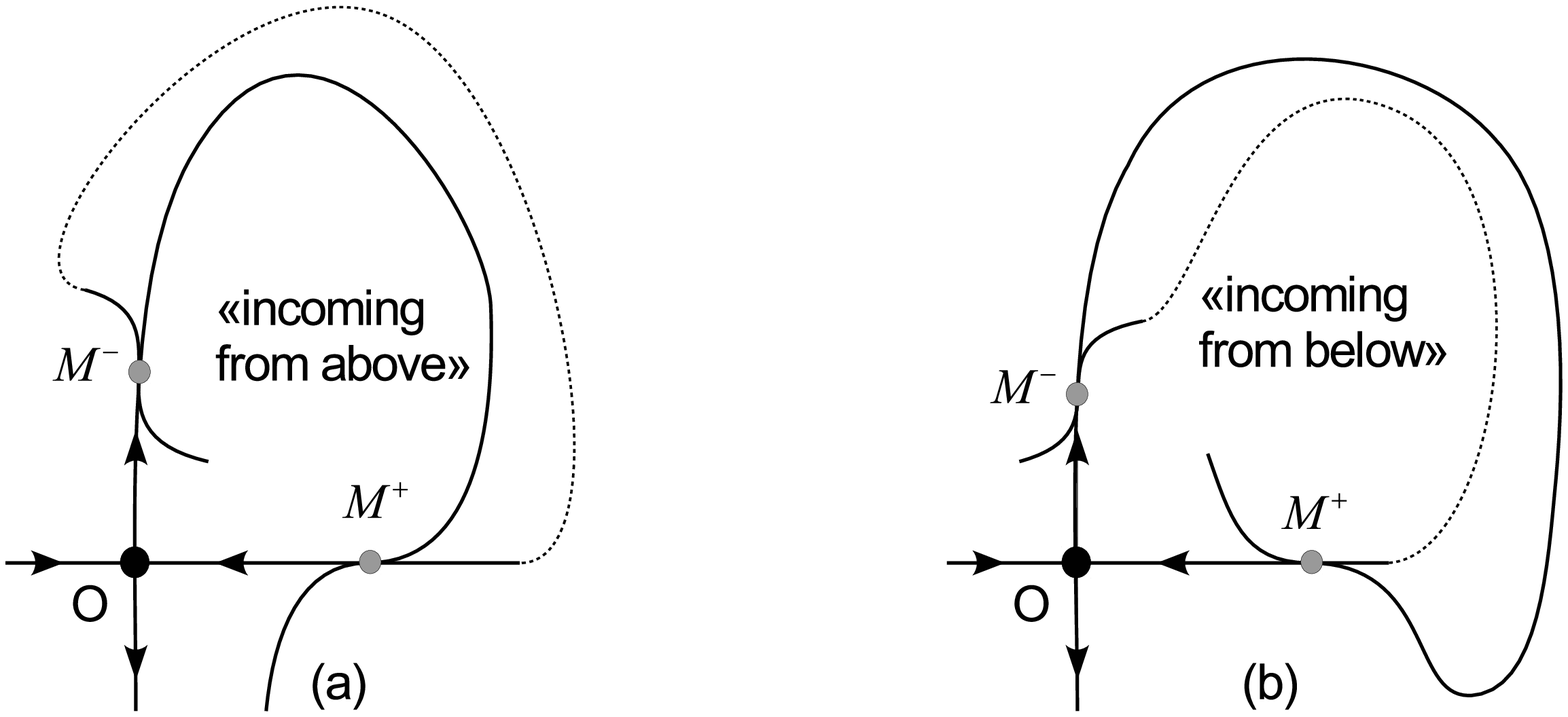}
 \caption{Two types of cubic homoclinic tangencies:
(a) incoming from above and (b) incoming from below.}
\label{fig2type}
\end{figure}

In the present paper we study bifurcations of single-round periodic orbits in the case of two-dimensional symplectic maps
with cubic
homoclinic tangencies. We put emphasis on  bifurcations leading to the birth
of elliptic periodic orbits. The main results can be briefly formulated as follows

\begin{thrm}
Let $f_\mu$, $\mu=(\mu_1,\mu_2)$, be
a two parameter family of symplectic maps  unfolding generally the cubic homoclinic tangency to the saddle fixed point $O$
with {eigenvalues} 
$\lambda$ and $\lambda^{-1}$, where $|\lambda|<1$. Let $V$ be a small neighborhood of the origin in the $(\mu_1,\mu_2)$-plane.
Then the following holds
\begin{enumerate}
\item[\rm 1.] In $V$, there exists
a bifurcation curve {(semicubical parabola)~$H_0$} with the cusp-point $\mu = (0,0)$
such that for $\mu \in H_0 \setminus \{(0,0)\}$, map~$f_\mu$ has a quadratic homoclinic tangency branched from 
the initial cubic tangency.

\item[\rm 2.]
In $V$, there exists {an infinite sequence of {elliptic zones} {$E_k$} accumulated to $H_0$ as $k\to\infty$ such that 
for $\mu \in E_k$,
map~$f_\mu$ has an elliptic single-round
orbit of period $k$.} The boundary of $E_k$ consists of curves $L_k^+$ and $L_k^-$ corresponding to the appearance of
single-round periodic orbits with
double {eigenvalues} $+1$ and $-1$, respectively. 
The elliptic orbit is generic for all values of $\mu\in E_k$, except for those belonging to the strong 1:3 and 1:4 resonance 
curves $L_k^{2\pi/3}$ and $L_k^{\pi/2}$
(the orbit has  {eigenvalues}
$e^{\pm i 2\pi/3}$ and $e^{\pm i\pi/2}$) and the non-twist curve $L_k^{(0)}$ (the first Birkhoff coefficient equals zero).
\end{enumerate}
\label{thm:main}
\end{thrm}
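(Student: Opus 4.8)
The plan is to follow the standard scheme for homoclinic bifurcations (cf.~\cite{GST93a,GST96,GST99}): single-round periodic orbits of period $k$ of $f_\mu$ are in one-to-one correspondence with the fixed points of the first return maps $T_k=T_1\circ T_0^{\,k}$, where $T_0$ is the local map in a neighbourhood $U_0$ of $O$, written in Shilnikov's finite-smooth normal form so that $W^s_{loc}=\{y=0\}$, $W^u_{loc}=\{x=0\}$ and the linear part of $T_0^{\,k}$ is $\mathrm{diag}(\lambda^k,\lambda^{-k})$ up to exponentially small terms, and $T_1=f_\mu^{\,q}$ is the global map along the homoclinic orbit, sending a neighbourhood of a homoclinic point $M^+\in W^u_{loc}$ onto a neighbourhood of $M^-\in W^s_{loc}$. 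Both maps are area-preserving. Since the tangency is cubic, the image $T_1(W^u_{loc})$ near $M^-$ has the form $y=\mu_1+\mu_2(x-x^-)+d\,(x-x^-)^3+\dots$ with $d\neq0$, and the two geometric types (incoming from above/below) correspond to the two signs of $d$. The main tool is the rescaling lemma quoted in the Introduction, specialised to the symplectic case: for $k$ large there is an affine rescaling of $(x,y)$ on a domain $\|(x,y)\|<D_k$ with $D_k\to\infty$, together with a reparametrisation $(\mu_1,\mu_2)\mapsto(M_1,M_2)$ whose coefficients grow like powers of $\lambda^{-k}$, bringing $T_k$ to the form $\mathbf{C}^1_\pm$ up to a $C^r$-error $O(\lambda^k)$ (here $J=1$ in the conservative case, the sign of the cubic term being fixed by $d$, with a parity distinction $k$ even/odd when $\lambda<0$).

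\textbf{Part 1.} A value $\mu$ corresponds to a homoclinic tangency of $f_\mu$ exactly when $\mu_1+\mu_2\xi+d\xi^3+O(\xi^4,|\mu|\,\xi^2)=0$ has a multiple root $\xi$ near $0$. The discriminant of the cubic $d\xi^3+\mu_2\xi+\mu_1$ vanishes on $4\mu_2^3+27d\,\mu_1^2=0$; by the implicit function theorem the full perturbed condition defines a $C^{r-1}$ curve $H_0$ through the origin, $C^1$-close to this semicubical parabola and hence with a cusp at $\mu=(0,0)$. For $\mu\in H_0\setminus\{(0,0)\}$ the multiple root is double (the triple root occurs only at $\mu=0$, the original cubic tangency), so $f_\mu$ has a quadratic homoclinic tangency branched off the cubic one. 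This gives statement~1.

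\textbf{Part 2.} The fixed points of $\mathbf{C}^1_\pm$ lie on $\{x=y\}$ with $M_1=(2-M_2)y\mp y^3$; the linearisation at such a point has Jacobian $1$ and trace $p=M_2\pm3y^2$, so the fixed point is elliptic iff $|p|<2$. It has a double multiplier $+1$ on $\{p=2\}$, a double multiplier $-1$ on $\{p=-2\}$, multipliers $e^{\pm2\pi i/3}$ on $\{p=-1\}$, and multipliers $e^{\pm i\pi/2}$ on $\{p=0\}$; eliminating $y$ via $p=\pm2$ (resp.\ $p=-1$, $p=0$) in $M_1=(2-M_2)y\mp y^3$ produces explicit curves in the $(M_1,M_2)$-plane whose pre-images under $(\mu_1,\mu_2)\mapsto(M_1,M_2)$ are $L_k^{\pm}$ (resp.\ $L_k^{2\pi/3}$, $L_k^{\pi/2}$), the region $|p|<2$ pulling back to $E_k$. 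To handle genericity of the elliptic orbit I would compute the first Birkhoff coefficient $B_1$ of $\mathbf{C}^1_\pm$ at the elliptic fixed point: it is a rational function of $(y,p)$ with poles inside the open zone only at $p=0$ and $p=-1$, and one checks that $\{B_1=0\}$ is a nonempty smooth ``non-twist'' curve $L_k^{(0)}$ there; away from $L_k^{2\pi/3}\cup L_k^{\pi/2}\cup L_k^{(0)}$, Moser's twist theorem then yields KAM-stability, i.e.\ the orbit is generic. Since the reparametrisation $(\mu_1,\mu_2)\mapsto(M_1,M_2)$ is a strong contraction as $k\to\infty$ and bounded $(M_1,M_2)$ forces $\mu$ to the locus $\{4\mu_2^3+27d\,\mu_1^2=0\}$ up to higher order, the zones $E_k$ shrink to points and accumulate on $H_0$. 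Finally, the $O(\lambda^k)$-error perturbs each of these non-degenerate conditions (the codimension-one bifurcations, the transversal resonance crossings, the vanishing of $B_1$) by an amount tending to $0$, so for $k$ large all the curves persist and statement~2 follows.

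\textbf{Main obstacle.} The delicate step is the behaviour at the $1{:}4$ resonance $p=0$, where Moser's twist theorem does not apply and the stability of the fixed point on $L_k^{\pi/2}$ must be decided from the resonant fourth-order normal form of $\mathbf{C}^1_\pm$; this amounts to comparing the detuning coefficient with the coefficient of the resonant term, and the answer differs for $\mathbf{C}^1_+$ and $\mathbf{C}^1_-$ — this is precisely the ``$1{:}4$ resonance problem'' mentioned in the abstract, requiring an honest fourth-order computation rather than a general-position argument. A secondary, purely technical difficulty is making the $C^r$-rescaling estimates uniform in $k$ and carrying out the sign/parity bookkeeping caused by $\lambda$ possibly being negative, which is what selects $\mathbf{C}^1_+$ versus $\mathbf{C}^1_-$ in each residue class of $k$.
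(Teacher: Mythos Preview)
Your approach is essentially identical to the paper's: Part~1 is proved exactly as in Proposition~\ref{th1g85} via the discriminant of the cubic in the equation of $T_1(W^u_{loc})$, and Part~2 follows the same scheme of rescaling $T_k$ to $\mathbf{C}_\pm^1$ (Lemma~\ref{henmain-c}, with $\nu=\mathrm{sign}(d\lambda^k)$ governing the sign and the even/odd alternation for $\lambda<0$), reading off the trace condition $|M_2\pm3y^2|<2$ for ellipticity, and pulling the curves $L^\pm$, $L_\varphi$, $L^{(0)}$ back via~(\ref{mui-c}).

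One clarification: your ``main obstacle'' is not an obstacle to Theorem~\ref{thm:main} as stated. The theorem asserts genericity of the elliptic orbit only \emph{away from} $L_k^{\pi/2}\cup L_k^{2\pi/3}\cup L_k^{(0)}$; nothing about the nature of the fixed point \emph{on} $L_k^{\pi/2}$ is claimed, so no fourth-order resonant normal-form computation is needed here. The paper's Section~\ref{sec:14rez} analysis of the 1:4 resonance (comparing $|B_1|$ with $|B_{03}|$, finding the degenerate points $B_{03}=0$ for $\mathbf{C}_+^1$ and $A=1$ for $\mathbf{C}_-^1$) is a separate investigation, not part of the proof of the theorem. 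Also a minor point: genericity (non-resonance plus $B_1\neq0$) is the hypothesis of the twist theorem, not its conclusion, so your phrasing ``KAM-stability, i.e.\ the orbit is generic'' has the implication reversed.
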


An illustration to Theorem~\ref{thm:main} is shown in Figure~\ref{fig:bdiagr2}. We see that the bifurcation diagrams
for single-round periodic orbits are different in the cases
(a)~$\lambda>0$ and the tangency is {incoming from below}; (b)~$\lambda>0$ and the tangency is {incoming from above}; 
(c)~$\lambda<0$.
Correspondingly, we label zones~$E_k$
as~$E_k^+$  (squids)
in the case (a) and~$E_k^-$ (cockroaches) in the case (b). Notice that in the case $\lambda<0$ zones~$E_k$ of 
different types alternate
depending on the parity of~$k$, and, therefore, for more definiteness, we enumerate these zones as $E_{2m}^+, E_{2m+1}^-,...$.

\begin{figure}[bt]
\centering
\includegraphics[height=7cm]{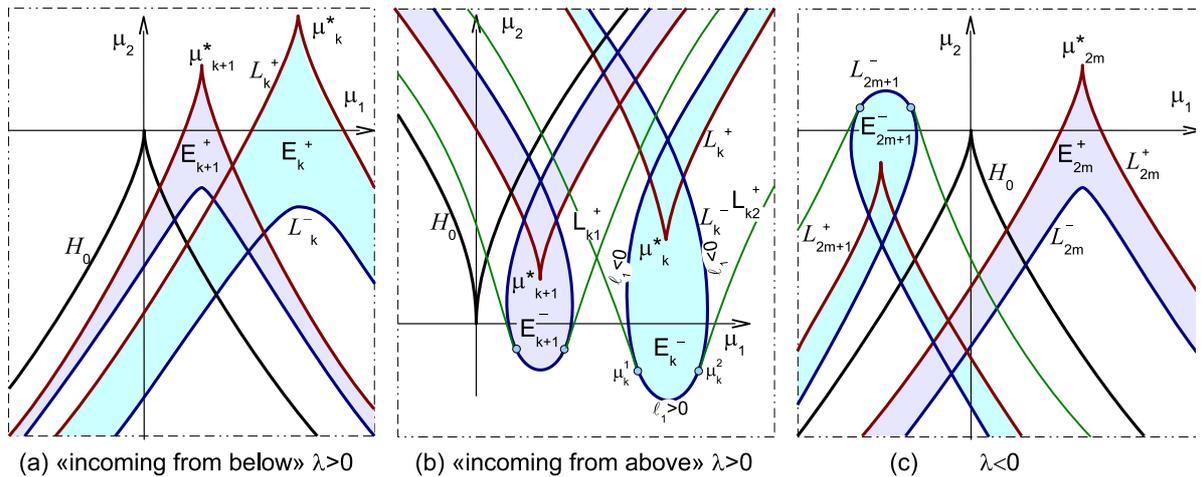}
\caption{{   Main elements of the bifurcation diagram for single-round periodic orbits in the
$(\mu_1,\mu_2)$-plane, where curve~$H_0$
and elliptic zones~$E_k^+$ (a) and~$E_k^{-}$ (b) are shown. These zones alternate for $\lambda<0$ (c). Curves~$L_k^+$ 
correspond to
nondegenerate conservative fold bifurcations except for cusp-points~$\mu_k^*$ of conservative pitch-fork bifurcations.
Curves~$L_k^-$ correspond to nondegenerate conservative period doubling bifurcations except for points~$\mu_k^1$ and~$\mu_k^2$ of
curve~$L_k^-$ of zone~$E_k^-$ where the first Lyapunov value~$\ell_1$ vanishes. 
Points~$\mu_k^1$ and~$\mu_k^2$ are the endpoints
of bifurcation curves~$L_{k1}^+$ and~$L_{k2}^+$ that
correspond to
conservative fold bifurcations of double-round periodic orbits (2-periodic orbits in the first return maps).}}
\label{fig:bdiagr2}
\end{figure}

We prove Theorem~\ref{thm:main} at the end of Section~\ref{sec:stateprob} (item~1) and 
Section~\ref{sect:bifcub} (item~2).
The proof of item~2 of Theorem~\ref{thm:main}
is mainly based on the rescaling results of Lemma~\ref{henmain-c} 
which show that
the corresponding first return map~$T_k$ can be brought, by linear changes of coordinates and parameters,
to a regular map asymptotically close as~$k\to\infty$ to the conservative cubic H\'enon map of the form~(\ref{cubHM}) with~$J=1$.
Moreover, we obtain map~$\mathbf{C}_+^1$
in the case of incoming from below tangency and map~$\mathbf{C}_-^1$ 
in the case of incoming from above tangency
(in the case $\lambda<0$, maps~$\mathbf{C}_+^1$ and~$\mathbf{C}_-^1$
appear, respectively, for even and odd numbers~$k$).
Bifurcations of fixed points and accompanied bifurcations of 2-periodic orbits for these conservative cubic H\'enon maps
are described in Section~\ref{sect:cubicHenon}. The corresponding bifurcation diagrams are shown in Figures~\ref{GK88-+2} 
and~\ref{GK88--2}
for the maps~$\mathbf{C}_+^1$ and~$\mathbf{C}_-^1$,
respectively. We also give the analytical expressions for the corresponding bifurcation curves,
see formulas~(\ref{eq:criv}).
After this,
the proof of Theorem~\ref{thm:main} becomes quite straightforward, see Section~\ref{sec:endofproof}.

Note that in the symplectic case, one of the main questions when studying elliptic points is related to their generic 
properties.
In the case of the conservative H\'enon map $\bar x = y, \bar y = M -x - y^2$, the problem of genericity (stability) 
of elliptic fixed points
was completely solved in~\cite{Bir87}, see also~\cite{SV09}.
Concerning the  conservative cubic H\'enon maps, the problem of genericity (stability) of fixed elliptic points has been
(almost completely) solved in~\cite{DM00}, where conditions for the KAM-stability of these points were obtained. Note also that
bifurcations  of lower-periodic orbits
(of period 1, 2, 3)
were also studied in~\cite{DM00}.

Concerning 4-periodic orbits, in Section~\ref{sec:14rez} we carry out
the corresponding bifurcation analysis {paying} main attention to those phenomena which are connected with 
the peculiarities of bifurcations
of fixed points with {eigenvalues} $e^{\pm i \pi/2}$, i.e. 1:4 resonance. We perform this study separately for
map $\mathbf{C}_+^1$ in Section~\ref{sec:14rez+} and for map $\mathbf{C}_-^1$
in Section~\ref{sec:14rez-}.

Note that the main complex local normal form of a map having a fixed point with {eigenvalues} $e^{\pm i\pi/2}$ can be written as follows,~\cite{Arn-Geom},
\begin{equation}
\begin{array}{l}
\bar \zeta=i(1+\beta)\zeta + i B_1 |\zeta|^2 \zeta +
          i B_{03}\zeta^{*3}+O(|\zeta|^5),
\label{HeComplNew13}
\end{array}
\end{equation}
where $\beta$ is a parameter characterizing deviation of the
angle argument $\varphi$ of {eigenvalues} of the fixed point from~$\pi/2$
($\varphi = \beta +  \pi/2$),
the coefficients $B_1 :=B_1(\beta)$ and $B_{03}:=B_{03}(\beta)$ are real and smoothly depend on~$\beta$.
Then the following conditions,~\cite{Arn-Geom},
\begin{equation}
|B_{03}| \neq 0, \;\;\; A:=\frac{|B_{1}(0)|}{|B_{03}(0)|} \neq 1
\label{D21D03}
\end{equation}
imply that bifurcations of the fixed point in~(\ref{HeComplNew13}) are nondegenerate.

We show that conditions~(\ref{D21D03}) can be violated for both maps~$\mathbf{C}_+^1$ and $\mathbf{C}_-^1$.  Namely, the case~$B_{03} = 0$ occurs for map~$\mathbf{C}_+^1$ and
the equality~$A=1$ can hold for  map~$\mathbf{C}_-^1$. This means that the structure of 1:4 resonance is different in these maps and we study the accompanied bifurcation phenomena in the present paper.

\begin{rmrk} As far as we know, the case $B_{03}=0$, when the fixed point 
in the normal form~(\ref{HeComplNew13}) is a nonlinear center (map~(\ref{HeComplNew13}) without~$O(|z|^5)$ terms is 
a nonlinear rotation) was not considered before. Nevertheless, this case is very 
interesting from various points of view. In particular, the problem of the existence of quasi-central elliptic points 
in reversible analytical maps is very important. Here,  bifurcations of such points (when the map is not conservative) 
lead to the birth of  pairs of periodic orbits sink-source and, as a consequence, to the break-down of 
conservativity and appearance of the so-called mixed dynamics~\cite{GST97,LSt04,DGGLS13} even inside symmetric 
elliptic islands~\cite{GLRT14}.
\label{rem01}
\end{rmrk}

\begin{rmrk} The case~$A=1$ of 1:4 resonance was considered in \cite{Bir87} for the conservative H\'enon map, 
see also~\cite{SV09}. However, for the cubic H\'enon map~$\mathbf{C}_-^1$, the case~$A=1$ is of another 
(more complicated) structure. In particular, proper bifurcations of a fixed point with 
{eigenvalues}~$e^{\pm i\pi/2}$ are replaced here by extrinsic bifurcations of some 4-periodic orbits (thus, 1:4 resonance demonstrates  certain non-twist properties).
\label{rem02}
\end{rmrk}

In the present paper we only study the most interesting and principal peculiarities of 1:4 resonance in maps~$\mathbf{C}_+^1$ and~$\mathbf{C}_-^1$. 
In addition to the analytical considerations, we also display certain numerical 
results in Figures~\ref{pi2plusres} and~\ref{fig_pi2-A1}. We plan to continue to study this in more detail in the nearest future.

The paper is organized as follows. In Section~\ref{sec:stateprob} we start with the statement of the problem and
also  present the results on the normal forms of symplectic saddle maps
(Lemmas~\ref{lem:nfgen} and~\ref{lem:Tknfgen}) and the normal form of global map~$T_1$
defined near a homoclinic point (Lemma~\ref{nfT1}).  We also prove item 1 of Theorem~\ref{thm:main} in
Section~\ref{sec:stateprob}, see Proposition~\ref{th1g85}. In Section~\ref{sect:bifcub} we complete the proof of 
Theorem~\ref{thm:main}. The main technical result there is the Rescaling Lemma, Lemma~\ref{henmain-c}, which shows that 
the first return maps can be written, in some rescaling coordinates, as maps asymptotically close to the conservative 
cubic H\'enon maps~$\mathbf{C}_+^1$ and~$\mathbf{C}_-^1$. Once we know the structure of bifurcations of fixed points 
in these maps we can easily recover bifurcations of single-round periodic orbits and, hence, prove the theorem. 
In Section~\ref{sec:14rez} we consider the problem on the structure of 1:4 resonance in maps~$\mathbf{C}_+^1$ 
and~$\mathbf{C}_-^1$.

\section{Statement of the problem.} \label{sec:stateprob}

Let $f_0$ be a $C^r$-smooth, $r\geq 5$, two-dimensional symplectic
diffeomorphism which satisfies the following conditions:
\begin{enumerate}
\item[{\bf (A)}] $f_0$ has a saddle fixed point~$O$ with
{eigenvalues}~$\lambda$ and~$\lambda^{-1}$, where $|\lambda|<1$;
\item[{\bf (B)}] the invariant manifolds $W^u(O)$ and $W^s(O)$
have a cubic homoclinic tangency at the points of some homoclinic
orbit~$\Gamma_0$.
\end{enumerate}

We distinguish two types of cubic tangencies of~$W^u(O)$ and~$W^s(O)$ at a homoclinic point: the tangency of first type
({incoming from above}) and the tangency of the second type ({incoming from below}). Both these types are shown in  
Figure~\ref{fig2type}.
When the {eigenvalue} $\lambda$ is positive the type of tangency remains the same for all points of the homoclinic orbit.
However, if $\lambda$ is negative the {incoming from above} and {incoming from below} tangencies alternate from point to point.

Let ${\cal H}_2$ be a (codimension two) bifurcation surface
composed of symplectic $C^r$-maps close to~$f_0$ and such that
every map of~${\cal H}_2$ has a
close to $\Gamma_0$
homoclinic orbit at whose points the manifolds $W^u(O)$ and $W^s(O)$ have a cubic tangency.
Let~$f_{\varepsilon}$ be a
family of symplectic
$C^r$-maps that contains map~$f_0$ at $\varepsilon =0$. We
suppose that the family depends smoothly on parameter~$\varepsilon$ and satisfies
the following condition:

\begin{itemize}
\item[{\bf (C)}] the family $f_{\varepsilon}$ is transverse to
${\cal H}_2$ at~$\varepsilon=0$.
\end{itemize}

It is natural to assume that condition~\textbf{C} holds for a two parameter family~$f_\varepsilon$, where
$\varepsilon = (\mu_1,\mu_2)$ and~$\mu_1$ and~$\mu_2$ are some parameters which split generally
the initial cubic tangency of the manifolds $W^u(O)$ and $W^s(O)$ at some homoclinic point.
We consider this question in more detail when proving Lemma~\ref{nfT1} and we deduce that,
without loss of generality, one can assume that~$\varepsilon\in \mathbb{R}^2$.

Let~$U$ be a sufficiently small fixed neighborhood of $O\cup\Gamma_0$. It consists of a small
disk~$U_0$ containing~$O$ and a number of small disks $u_i, i=1,2,\ldots,$ surrounding
those points of~$\Gamma_0$ that do not lie in~$U_0$ (see
Figure~\ref{fg:fabhab}(a)). We call a periodic or homoclinic to $O$ orbit entirely lying in~$U$ to be \emph{$p$-round} if it has exactly~$p$ intersection points with every disk $u_i$. For $p=1$ and $p=2$, we use the terms {\em single-round} and
\emph{double-round} orbits, respectively. 
\begin{figure}[tb]
\centering
    \includegraphics[width=0.4\textwidth]{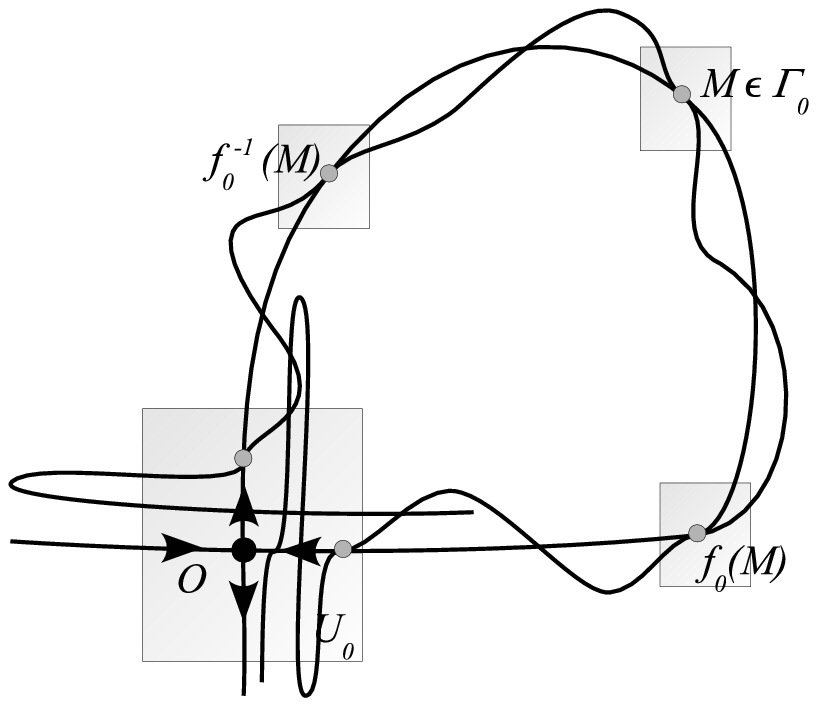}
   \includegraphics[width=0.3\textwidth]{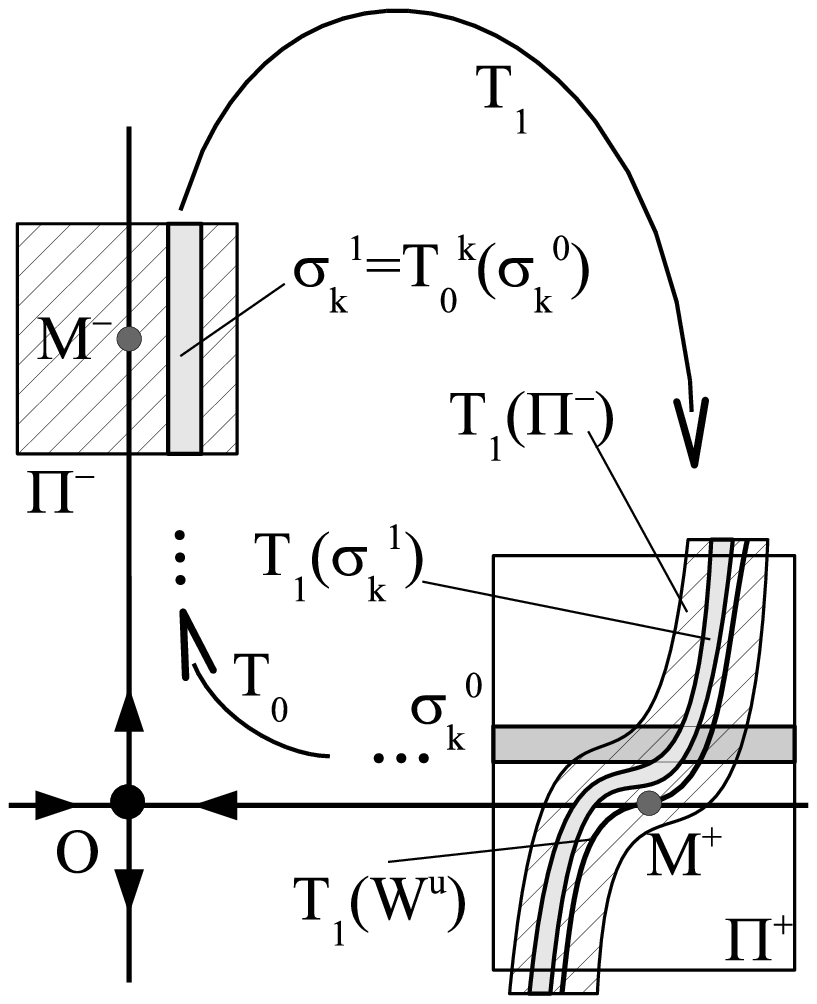}
\caption[]{{ 
(a) An example of planar map
having a cubic tangency at the points of a homoclinic
orbit~$\Gamma_0$. Some of these homoclinic points are shown as
grey circles. Also a small neighborhood of the set $O\cup\Gamma_0$
is shown as the union of the squares;
(b) construction of first return map~$T_k = T_1T_0^k$. }  }
\label{fg:fabhab}
\end{figure}

In the present paper we study bifurcations of
single-round periodic orbits in the families~$f_\varepsilon$ (we keep in mind that $\varepsilon = (\mu_1,\mu_2)$). Note that every
point of such an orbit can be considered as a fixed point of the
corresponding \emph{first return map}. Such a map is usually
constructed as a superposition~$T_k= T_1T_0^k$ of two maps~$T_0\equiv T_0(\varepsilon)$ and $T_1\equiv T_1(\varepsilon)$, see
Figure~\ref{fg:fabhab}(b). Map~$T_0$ is called \emph{local map}
and it is the restriction of~$f_\varepsilon$ onto~$U_0$, i.e. $T_0(\varepsilon)\equiv f_\varepsilon\bigl|_{U_0}$.
Map~$T_1$ is called \emph{global map}, it is defined as
$T_1 \equiv f_\varepsilon^q$ with an integer~$q$ and maps a small neighborhood
$\Pi^-\subset U_0$ of some
point $M^-\in W^u_{loc}(O)$ of orbit~$\Gamma_0$ into a
neighborhood $\Pi^+\subset U_0$ of another
point $M^+\in W^s_{loc}(O)$ of~$\Gamma_0$ 
such that $f_0^q(M^-)=M^+$. Thus, any fixed point of~$T_k$ is a
point of a single-round periodic orbit of period~$k+q$ for~$f_\varepsilon$.

In order to study maps~$T_k$ for all sufficiently
large integer~$k$, it is very important to have good coordinate representations
for both maps~$T_0$ and~$T_1$, especially it concerns local map~$T_0$
and its iterations~$T_0^k$ for large~$k$.

We use in~$U_0$ the canonical coordinates~$(x,y)$ given by the following lemma.

\begin{lmm} {\rm \cite{GG09}}\label{lem:nfgen}
For any given integer~$n$ (such that $n < r/2$ or~$n$ is arbitrary
for $r=\infty$ or $r=\omega$ (the real analytic case)), there is a
canonical change of coordinates, of class~$C^r$ for~$n=1$ or
$C^{r-2n}$ for~$n\geq 2$, that brings~$T_0$ to the following
form\footnote{Note that the smoothness of these changes with
respect to parameters can be $C^{r-2}$ for~$n=1$ or $C^{r-2n-2}$ for~$n\geq 2$,
respectively, see more details in \cite{GST07,book}.}
\begin{equation}\label{eq:fng1}
\begin{array}{l}
\bar x = \lambda x {M_n(xy)}
+ x|xy|^nO\left(|x|+|y|\right),\;\;\\
\bar y = \lambda^{-1} y M_n^{-1}(xy)
+ y|xy|^nO\left(|x|+|y|\right),\;\;
\end{array}
\end{equation}
where $M_n(xy) = 1+\beta_1\cdot xy +\dots + \beta_n\cdot (xy)^n$ and $\beta_i$ are the Birkhoff coefficients.
\end{lmm}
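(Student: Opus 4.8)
The plan is to obtain~(\ref{eq:fng1}) by a finite Birkhoff-type normalization carried out entirely in the symplectic category, proceeding degree by degree in the Taylor expansion of $T_0$ at $O$, and then to repackage the non-normalized part as the claimed factorized remainder.

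\emph{Preliminary reduction.} Since $f_0$ is symplectic and $O$ is a saddle fixed point with real eigenvalues $\lambda,\lambda^{-1}$, a linear canonical change (diagonalizing $DT_0(O)$ and rescaling one eigenvector to stay area preserving) makes the linear part $\mathrm{diag}(\lambda,\lambda^{-1})$. The local manifolds $W^u_{loc}(O)$ and $W^s_{loc}(O)$ are then $C^r$ graphs over the axes tangent to them at $O$, and two successive shears of the form $x\mapsto x-\phi(y)$ and $y\mapsto y-\psi(x)$ — which are canonical and tangent to the identity at $O$ — straighten both manifolds onto the coordinate axes without changing the linear part. After this $T_0$ reads $\bar x=\lambda x+f(x,y)$, $\bar y=\lambda^{-1}y+g(x,y)$ with $\{y=0\}$ and $\{x=0\}$ invariant and the linear part along each axis exactly $\lambda$, resp. $\lambda^{-1}$; hence $f=x\tilde f$, $g=y\tilde g$ with $\tilde f(0,y)\equiv\tilde g(x,0)\equiv 0$.

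\emph{Inductive normalization and the reciprocal structure.} Suppose after some steps $\bar x=\lambda x\,P(xy)+\dots$, $\bar y=\lambda^{-1}y\,Q(xy)+\dots$ with $P,Q=1+O(xy)$ polynomials and the dots of Taylor degree $>2j+1$. To kill the non-resonant monomials of the next degree I conjugate $T_0$ by a near-identity canonical map given by a mixed generating function $S(x,\bar y)=x\bar y+(\text{homogeneous correction})$ (equivalently, by the time-one map of a polynomial Hamiltonian vector field). The induced action on the Taylor coefficients gives, at each degree, a homological equation in which the coefficient of $x^ay^b$ in the $\bar x$-component is divided by $\lambda^{a-b-1}-1$; this vanishes exactly when $a=b+1$, and since $|\lambda|\neq1$ the remaining divisors are bounded away from $0$ (indeed large when $a-b-1<0$), so there are no small-divisor obstructions and every non-resonant monomial can be removed. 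The surviving terms are $x(xy)^k$ in $\bar x$ and $y(xy)^k$ in $\bar y$. Because every change was canonical, the Jacobian of the truncated map is $\equiv1$; computing it for $\bigl(\lambda xP(xy),\,\lambda^{-1}yQ(xy)\bigr)$ one finds it equals $\frac{d}{du}\bigl[u\,P(u)Q(u)\bigr]\big|_{u=xy}$, so $P(u)Q(u)\equiv1$ up to the relevant order, i.e. $Q=P^{-1}$. Iterating $n$ times yields the leading part $\lambda x\,M_n(xy)$, $\lambda^{-1}y\,M_n^{-1}(xy)$ with $M_n(xy)=1+\beta_1xy+\dots+\beta_n(xy)^n$, the $\beta_i$ being the Birkhoff coefficients.

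\emph{The remainder and smoothness bookkeeping — the main obstacle.} After $n$ steps the difference $R_x:=\bar x-\lambda x\,M_n(xy)$ is flat to order $2n+1$; invariance of $\{x=0\}$ makes it divisible by $x$, and a final round of ``mixed'' normalization steps (eliminating the monomials $x^ay^b$ with $a\le n$ or $b\le n$ that are not of the form $x(xy)^k$) upgrades the flatness to the product form $R_x=x|xy|^nO(|x|+|y|)$ by a Hadamard-type factorization; similarly for $R_y$. All the coordinate changes are $C^\infty$ on a $C^\infty$ map, but on a $C^r$ map each step — solving the homological equation at a pair of degrees and re-expanding the conjugated map — costs a bounded number of derivatives, so bookkeeping the losses gives the regularities stated in the lemma ($C^r$ for $n=1$, $C^{r-2n}$ for $n\geq2$, and two derivatives fewer in the dependence on the parameters). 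The delicate points are precisely the simultaneous canonical straightening of $W^{u}$ and $W^{s}$ and this last factorization that turns ``flat of order $2n+1$'' into the genuine product $x|xy|^nO(|x|+|y|)$.
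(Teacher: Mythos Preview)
The paper does not give its own proof of this lemma; it is quoted verbatim from \cite{GG09} (with the case $n=1$ already in \cite{GS00,GST07}), so there is no in-paper argument to compare against. Your outline follows the standard Birkhoff-normalization route, which is indeed how the cited references proceed, and your key observation that symplecticity forces $Q=P^{-1}$ via $\tfrac{d}{du}\bigl[u\,P(u)Q(u)\bigr]\equiv 1$ is correct and is the core algebraic point.

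Two steps are under-justified, and they are precisely where the real work lies. First, after the shears that straighten $W^s$ and $W^u$ you claim $\tilde f(0,y)\equiv 0$ and $\tilde g(x,0)\equiv 0$. Straightening alone gives only $f(0,y)=g(x,0)=0$; what you wrote additionally asserts that the \emph{transverse} derivative of $T_0$ along each invariant manifold is exactly $\lambda^{\pm1}$, which is an extra linearization-along-the-manifold step (a one-dimensional cocycle trivialization) carried out explicitly in \cite{GST07,book} and responsible for part of the smoothness loss. Second, the remainder factorization is not a consequence of normalizing the Taylor jet to order $2n+1$: a term such as $x^{2n+1}y$ is $O(|(x,y)|^{2n+2})$ yet is not of the form $x|xy|^{n}O(|x|+|y|)$, since that would require divisibility by $y^{n}$. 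To reach the factored remainder one must remove \emph{all} non-resonant monomials $x^ay^b$ with $a\le n$ or $b\le n-1$ (infinitely many, for each fixed $a$ or $b$), which in finite smoothness is done not by iterated polynomial conjugations but by a single further change exploiting invariance of the axes together with repeated Hadamard division; this is exactly where the $C^{r-2n}$ count is established in \cite{GG09,GST07}. You flag both issues as ``delicate points'', but in the cited proofs they are the substance rather than the periphery.
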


The normal forms (\ref{eq:fng1}) are very suitable for effective
calculations of maps~$T_0^k:(x_0,y_0)\rightarrow (x_k,y_k)$ with
sufficiently large integer $k$. Thus, the following result is valid.

\begin{lmm} {\rm \cite{GG09}} \label{lem:Tknfgen}
Let~$T_0$ be given by (\ref{eq:fng1}), then map~$T_0^k$ can be
written, for any integer~$k$, as follows
\begin{equation}
\begin{array}{l}
x_k = \lambda^k x_0\cdot R_n^{(k)}(x_0y_k,\varepsilon)  +
\lambda^{nk}P_n^{(k)}(x_0,y_k,\varepsilon), \\
y_0 = \lambda^{k} y_k\cdot R_n^{(k)}(x_0y_k,\varepsilon)  +
\lambda^{nk}Q_n^{(k)}(x_0,y_k,\varepsilon),
\end{array}
\label{eq:Tkgen}
\end{equation}
where
$
R_n^{(k)}\equiv 1 + \tilde\beta_1(k)\lambda^k x_0y_k +\dots +
\tilde\beta_n(k)\lambda^{nk} (x_0y_k)^n,
$
$\tilde\beta_i(k)$, $i=1,\dots,n,$ are some {$i$-th degree} polynomials of~$k$ with coefficients
depending on
$\beta_1,\dots,\beta_i,$ and the functions $P_n^{(k)},Q_n^{(k)} =
o\left(x_0^ny_k^n\right)$ are uniformly bounded in~$k$ along with
all their derivatives by coordinates up to order either~$(r-2)$ for~$n=1$ or~$(r-2n-1)$ for~$n\geq 2$.
\end{lmm}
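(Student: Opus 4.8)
The plan is to treat the iteration as a two-point boundary value problem: among the orbit segments $(x_0,y_0)\mapsto(x_1,y_1)\mapsto\dots\mapsto(x_k,y_k)$ of~$T_0$, the well-posed data are the contracting coordinate $x_0$ at the start and the expanding coordinate $y_k$ at the end. First I would rewrite~(\ref{eq:fng1}) in ``cross'' form, keeping the $x$-equation and solving the $y$-equation for $y$ in terms of $\bar y$: from $\bar y=\lambda^{-1}yM_n^{-1}(xy)+\dots$ one gets $y=\lambda\,\bar y\,M_n(xy)+\dots$. This yields, for $j=0,\dots,k-1$, the coupled recursion
\[
x_{j+1}=\lambda x_j M_n(u_j)+\dots,\qquad y_j=\lambda y_{j+1}M_n(u_j)+\dots,\qquad u_j:=x_jy_j,
\]
in which --- and this is the structural point --- the \emph{same} factor $M_n(u_j)$ enters both relations. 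I would then work in the space of finite sequences $\{(x_j,y_j)\}_{j=0}^{k}$ with the weighted norm $\sup_j\bigl(|\lambda|^{-j}|x_j|+|\lambda|^{-(k-j)}|y_j|\bigr)$, take the zeroth approximation $x_j^{(0)}=\lambda^jx_0$, $y_j^{(0)}=\lambda^{k-j}y_k$, and run successive approximations. The small factors accompanying every nonlinear term make the relevant operator a contraction uniformly in~$k$, which gives existence, uniqueness, smooth dependence on $(x_0,y_k,\varepsilon)$, and the a priori bounds $|x_j|=O(\lambda^j|x_0|)$, $|y_j|=O(\lambda^{k-j}|y_k|)$ --- hence $|u_j|=O(\lambda^k|x_0y_k|)$ --- all uniform in~$k$ together with derivatives up to the order asserted in the lemma.

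Next I would use that $xy$ is \emph{exactly} invariant under the polynomial (leading) part of~(\ref{eq:fng1}) and only perturbed by the remainder, $u_{j+1}=u_j\bigl(1+|u_j|^nO(|x_j|+|y_j|)\bigr)$; telescoping this and using the a priori bounds, $\sum_{j}|u_j|^nO(|x_j|+|y_j|)=O(\lambda^{nk})$ by a geometric sum (indeed with an extra factor vanishing as $x_0,y_k\to0$), so $u_j=u_0\bigl(1+O(\lambda^{nk})\bigr)$ uniformly in~$j$. Telescoping the two recursions themselves gives
\[
x_k=\lambda^k x_0\,\Pi_k+E_x,\qquad y_0=\lambda^k y_k\,\Pi_k+E_y,\qquad \Pi_k:=\prod_{j=0}^{k-1}M_n(u_j),
\]
with the \emph{common} product $\Pi_k$ and with $E_x,E_y$ accumulating the contributions of the $x|xy|^nO(|x|+|y|)$-type remainder terms along the orbit. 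Feeding the second identity back into $u_0=x_0y_0$ turns it into an implicit equation $u_0=\lambda^kx_0y_k\,\Pi_k+\dots$ which, $u_0$ being small, is solved by iteration to express $u_0$ --- and, by the previous step, every $u_j$ --- as $\lambda^kx_0y_k$ times a function of $\lambda^kx_0y_k$ and~$\varepsilon$.

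It then remains to expand $\Pi_k$. Writing $\log M_n(s)=\sum_{i\ge1}\gamma_is^i$, where $\gamma_i$ depends only on $\beta_1,\dots,\beta_i$, we get $\log\Pi_k=\sum_{i\ge1}\gamma_i\sum_{j=0}^{k-1}u_j^i$; since each $u_j$ equals $\lambda^kx_0y_k$ up to higher-order corrections, $\sum_j u_j^i=k(\lambda^kx_0y_k)^i+\dots$, so re-exponentiating and collecting powers of $\lambda^kx_0y_k$ up to order~$n$ produces precisely
\[
R_n^{(k)}(x_0y_k,\varepsilon)=1+\tilde\beta_1(k)\,\lambda^kx_0y_k+\dots+\tilde\beta_n(k)\,\lambda^{nk}(x_0y_k)^n,
\]
in which the coefficient $\tilde\beta_i(k)$ of $(\lambda^kx_0y_k)^i$ is a polynomial in~$k$ of degree~$i$ --- its top term $\tfrac1{i!}\gamma_1^i\,k^i$ coming from the exponential --- with coefficients assembled from $\beta_1,\dots,\beta_i$. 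All discarded pieces (the order $>n$ terms of this expansion, the $O(\lambda^{nk})$ corrections to the $u_j$, and $E_x,E_y$) are of the form $\lambda^{nk}\cdot o(x_0^ny_k^n)$; since the contraction estimates are uniform in~$k$, the resulting remainders $P_n^{(k)},Q_n^{(k)}$ and their derivatives up to the claimed order are bounded uniformly in~$k$ as well, which is exactly~(\ref{eq:Tkgen}).

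The main obstacle is not any single identity but the uniformity bookkeeping. One must keep \emph{every} estimate --- including the bounds on derivatives in the coordinates and in~$\varepsilon$, which enter through $M_n$, through the remainder terms, and through the boundary data $x_0,y_k$ --- uniform in~$k$, while simultaneously checking that the powers of~$k$ generated by summing $k$ nearly equal terms occur with exactly the predicted degree and never conspire with the exponentially small factors to ruin convergence; they cannot, because $k\lambda^k\to0$, but verifying this for the higher-order terms of the expansion and for the derivative norms is where the genuine work lies. (Alternatively, one may simply invoke the normal-form/iteration machinery of~\cite{GG09}, of which this lemma is the special case stated here.)
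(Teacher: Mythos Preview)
The paper does not give its own proof of this lemma: it is quoted verbatim from~\cite{GG09} (with the case $n=1$ attributed to~\cite{GS00,GST07} in Remark~\ref{rem1}), and the surrounding Remarks~\ref{rem1}--\ref{rem2} only place it in context. So there is nothing in the paper to compare your argument against; your closing parenthetical --- simply cite~\cite{GG09} --- is precisely what the authors do.

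That said, your sketch is the standard route to such results and is essentially what one finds in the cited sources. The ``cross form'' (boundary-value) set-up with the weighted sup-norm and contraction goes back to Shil'nikov; the key structural observation that $xy$ is an exact invariant of the polynomial part of~(\ref{eq:fng1}), so that all $u_j$ agree with $u_0$ up to $O(\lambda^{nk})$, is exactly right; and the telescoping that produces the \emph{same} product $\Pi_k$ in both lines of~(\ref{eq:Tkgen}) is the heart of the matter. One small caution: when you say the top term of $\tilde\beta_i(k)$ is $\tfrac{1}{i!}\gamma_1^i k^i$, remember that the implicit relation $u_0=\lambda^k x_0y_k\,\Pi_k+\dots$ feeds $k$-dependent corrections back into $\Pi_k$, so the actual leading coefficient differs from the na\"ive one (compare the formula $\tilde\beta_2(k)=\beta_2 k-\tfrac12\beta_1^2k^2$ recorded in Remark~\ref{rem2}); the degree, however, remains~$i$, which is all the lemma asserts. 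Your identification of the real difficulty --- uniform-in-$k$ control of all derivatives through the contraction estimates --- is accurate and is where the proofs in~\cite{GS00,GST07,GG09} spend their effort.
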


\begin{rmrk}
The main case~$n=1$ of Lemmas~\ref{lem:nfgen}  and~\ref{lem:Tknfgen}, related to the so-called main normal form of a 
saddle map, was proved in the papers~\cite{GS00, GST07}.
 Analogues of Lemmas~\ref{lem:nfgen}  and~\ref{lem:Tknfgen} in the case of
area-preserving and orientation-reversing maps were
proved in~\cite{DGG15}.
\label{rem1}
\end{rmrk}

\begin{rmrk}
Normal form~(\ref{eq:fng1})
can be considered as a certain finitely smooth
approximation of the following analytical Moser normal form 
\begin{equation}
\begin{array}{l}
\bar x = \lambda(\varepsilon)x\cdot M(xy,\varepsilon),\;\; \bar y
= \lambda^{-1}(\varepsilon) y\cdot M^{-1}(xy,\varepsilon),
\end{array}
\label{eq:BMnf}
\end{equation}
which takes place for $\lambda>0$~\cite{Mos56},  where
$M(xy,\varepsilon)= 1+ \sum\limits_{i=1}^{\infty} \beta_i\cdot (xy)^i$.
Accordingly, relation~(\ref{eq:Tkgen})
looks as a very good approximation for the corresponding formulas in
the analytical case,~\cite{GS97},
$$
x_k = \lambda^k x_0\cdot R^{(k)}(x_0y_k,\varepsilon),\;\; y_0 =
\lambda^{k} y_k\cdot R^{(k)}(x_0y_k,\varepsilon),
$$
where $R^{(k)}\equiv 1 + \sum\limits_{i=1}^{\infty}\tilde\beta_i(k)\lambda^{nk} (x_0y_k)^i $ and
$
\tilde\beta_1(k) = \beta_1 k,\;\; \tilde\beta_2(k) = \beta_2 k -
\frac{1}{2}\beta_1^2 k^2,
$ etc.
\label{rem2}
\end{rmrk}

In the coordinates of Lemma~\ref{lem:nfgen}, we have that
$M^+=(x^+,0),M^-=(0,y^-)$.  Without loss of generality, we assume
that~$x^{+}>0$ and~$y^{-}>0$. Let neighborhoods~$\Pi^{+}$ and
$\Pi^{-}$ of the homoclinic points~$M^{+}$ and~$M^{-}$,
respectively, be sufficiently small such that
$T_0(\Pi^+)\cap\Pi^+=\emptyset$ and
$T_0^{-1}(\Pi^-)\cap\Pi^-=\emptyset$. Then, as usually (see e.g.~\cite{GS73}), the successor map from~$\Pi^+$ 
into~$\Pi^-$ by orbits of~$T_0$ is defined, for all sufficiently small
$\varepsilon$,  on the set consisting of infinitely many strips
$\sigma_k^0\equiv \Pi^+\cap T_0^{-k}\Pi^-$, $k=\bar k,\bar
k+1,\dots$. The image of~$\sigma_k^0$ under~$T_0^k$ is the strip
$\sigma_k^1= T_0^k(\sigma_k^0)\equiv \Pi^-\cap T_0^{k}\Pi^+$. As~$k\to\infty$, the strips~$\sigma_k^0$ and~$\sigma_k^1$ accumulate
to $W^s_{loc}$ and $W^u_{loc}$, respectively.

We can write  global map~$T_{1}(\varepsilon):\Pi^{-}\rightarrow \Pi^{+}$ as follows (in the
coordinates of Lemma~\ref{lem:nfgen})
\begin {equation}
\begin {array}{l}
\overline{x} -x^{+}  =  F(x,y-y^{-},\varepsilon),\;\;
\overline{y} = G(x,y-y^{-},\varepsilon),
\end {array}
\label{eq:t1}
\end{equation}
where $F(0,0,0)=0,G(0,0,0)=0$ and 
\begin {equation}
\frac{\partial G(0)}{\partial y}=0,\;\; \frac{\partial^2
G(0)}{\partial y^2}=0,\;\;\frac{\partial^3 G(0)}{\partial y^3}=
6d\neq 0.
\label{eq:t1-cubt}
\end{equation}
These relations follow from condition {\bf (B)}, which means that for $\varepsilon=0$
curve~$T_1(W^u_{loc}):\{\overline{x} -x^{+} = F(0,y-y^{-},0),
\overline{y} = G(0,y-y^{-},0)\}$ has a cubic tangency with
$W_{loc}^s:\{\bar y =0\}$ at $M^+$. When the parameters vary, this
tangency can split and, by condition {\bf (C)}, family~(\ref{eq:t1}) unfolds generally the initial cubic tangency. In
this case,
global map~$T_1$ can be written in a certain normal form that
the following lemma shows.
\begin{lmm}
If condition {\bf (C)} holds, then map~$T_1(\varepsilon)$ can
be brought to the following form
\begin {equation}
\begin {array}{l}
\overline{x} -x^{+}  =  a x + b (y-y^{-}- ) + O\left(x^2 +
(y-y^-)^2\right),\\
\overline{y} = \mu_1 + \mu_2(y-y^{-}) + cx +
d (y-y^{-})^3 + O\left(x^2 + |x||y-y^-| + (y-y^-)^4
\right),
\end {array}
\label{eq:t1par}
\end{equation}
where
$$
b(\varepsilon)c(\varepsilon)\;\equiv\;-1
$$
and the coefficients $a,b,c,d$
as well as~$x^+,y^-$ depend smoothly (with the same smoothness
as for the initial map~(\ref{eq:t1})) on new parameters
$\varepsilon$ 
such that $\mu_1\equiv\varepsilon_1$ and $\mu_2\equiv\varepsilon_2$ (the other
parameters are not essential).
\label{nfT1}
\end{lmm}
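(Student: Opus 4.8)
The plan is to bring the global map~\eqref{eq:t1} into the normal form~\eqref{eq:t1par} by a composition of canonical (symplectic) changes of coordinates together with a reparametrization of~$\varepsilon$. First I would Taylor-expand the functions $F$ and $G$ of~\eqref{eq:t1} at the origin. Write $F = a_0 + a x + b(y-y^-) + \dots$ and $G = b_0 + b_1 x + b_2 (y-y^-) + b_3(y-y^-)^2 + d(y-y^-)^3 + \dots$, where~\eqref{eq:t1-cubt} already forces $b_2 = b_3 = 0$ and $d\neq 0$ at $\varepsilon=0$ (and by continuity $d(\varepsilon)\neq 0$ for small~$\varepsilon$). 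The constant $a_0$ can be absorbed into the definition of~$x^+$ and the constant $b_0$ into $\mu_1$; more precisely, I would use the two essential parameters to set $b_0 \equiv \mu_1$ and the coefficient of $(y-y^-)$ in $\bar y$ to $\mu_2$, invoking condition~\textbf{(C)} (transversality of $f_\varepsilon$ to~$\mathcal H_2$) to guarantee that the map from $(\varepsilon_1,\varepsilon_2)$ to these two quantities is a local diffeomorphism, so that after renaming we may take $\mu_1\equiv\varepsilon_1$, $\mu_2\equiv\varepsilon_2$ with all other parameter dependence inessential.

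Next I would eliminate the quadratic term $b_3(y-y^-)^2$ in the $\bar y$-component. This is where I would perform a near-identity canonical change of the form $(x,y)\mapsto (x + \alpha x\,(y-y^-) + \dots,\ y + \dots)$, or equivalently shift the $y$-coordinate of the image by a quadratic amount, chosen so that the coefficient of $(y-y^-)^2$ in the new $\bar y$ vanishes; the freedom of a symplectic coordinate change near $M^+$ is exactly enough to kill this one coefficient while keeping the cubic coefficient $d\neq 0$ (the cubic term is the genuine invariant of the tangency). One must check that such a change can be realized canonically — this is standard, e.g.\ by generating functions, and it only alters the $O(\cdot)$ remainders and the lower-order coefficients in a controlled smooth way. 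The smoothness claim follows because all changes are polynomial (near-identity) with coefficients built smoothly from $a,b_1,b_2,b_3,d$, hence inherit the $C^r$ (or $C^{r-\text{something}}$) regularity of the original $F,G$ and their parameter dependence.

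Finally, the identity $b(\varepsilon)c(\varepsilon)\equiv -1$ must be extracted. This is the one genuinely structural point and the main obstacle: it is forced by the fact that $T_1$ is symplectic (area-preserving with the standard form $dx\wedge dy$), so its Jacobian is identically~$1$. Computing the Jacobian of~\eqref{eq:t1par} at a point of $W^u_{loc}$, i.e.\ along $x=0$, and using that the $\partial \bar y/\partial y$ contribution from the cubic term vanishes to leading order there, one gets $\det D T_1 = a\cdot 0 - b\cdot c + (\text{higher order}) = -bc$, and area-preservation gives $-bc\equiv 1$, i.e.\ $bc\equiv -1$; in particular $b\neq 0$ and $c\neq 0$. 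Here one has to be careful that this holds identically in $\varepsilon$ and not merely at $\varepsilon=0$, which is immediate since every map in the family is symplectic. Collecting: the constant adjustments fix $x^+, y^-$ and identify $\mu_1,\mu_2$; the quadratic-killing canonical change produces the displayed form with remainder $O(x^2 + |x||y-y^-| + (y-y^-)^4)$ in the second component and $O(x^2+(y-y^-)^2)$ in the first; and symplecticity yields $bc\equiv -1$. This completes the proof of Lemma~\ref{nfT1}.
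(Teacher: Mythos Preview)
Your overall strategy is close to the paper's, but the step where you kill the $(y-y^-)^2$ term is handled differently and carries a real gap. The paper performs \emph{no} coordinate change: it applies the implicit function theorem to $\partial^2 G/\partial y^2=0$ (solvable since $G_{yyy}(0)=6d\neq0$) to obtain the inflection locus $y-y^-(\varepsilon)=\varphi(x,\varepsilon)$ with $\varphi(0,\varepsilon)\equiv0$ --- in effect redefining the base point $y^-(\varepsilon)$ along $W^u_{\mathrm{loc}}$ --- and then Taylor-expands $G$ about this curve, where the quadratic term is absent by construction; since $\varphi=O(x)$, re-expanding in $(y-y^-)$ pushes every residual quadratic contribution into the $O(|x||y-y^-|)$ remainder. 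Your proposal instead is a near-identity symplectic change of variables in $\Pi^-$ (or $\Pi^+$). The trouble is that the coordinates in $U_0$ are already pinned down by Lemma~\ref{lem:nfgen}; a nontrivial change there would disturb the normal form~(\ref{eq:fng1}) of $T_0$ and hence the iterate estimates of Lemma~\ref{lem:Tknfgen} on which the whole first-return-map construction rests. You give no argument that this disturbance is absorbable, and the clean fix is precisely the paper's: shift the \emph{expansion center} $M^-$, not the coordinates.

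A smaller slip: in your Jacobian computation at $M^-$ you set $\partial\bar y/\partial y=0$, but from~(\ref{eq:t1par}) this derivative equals $\mu_2$, so symplecticity gives $a\mu_2-bc=1$, i.e.\ $bc=-1+a\mu_2$ rather than $bc\equiv-1$. (The paper's own proof does not derive this relation at all --- it is simply asserted in the statement --- and for the subsequent rescaling only $bc=-1+O(\varepsilon)$ is actually needed.)
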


\begin{proof} By virtue of (\ref{eq:t1-cubt}), {when~$x$ and~$\varepsilon$ are small enough},
equation~$\partial^2G(x,y-y^-,\varepsilon)/\partial y^2 =0$ can be solved for $y-y^-$. The solution
has the form $y-y^-(\varepsilon)=\varphi(x,\varepsilon)$, where
$\varphi(0,\varepsilon)\equiv 0$. Then, we can write the following
Taylor expansion
for function~$G$ near
curve~$y-y^-(\varepsilon)=\varphi(x,\varepsilon)$:
$$
 G\;\equiv\;G(x,0,\varepsilon) + \frac{\partial
G(x,0,\varepsilon)}{\partial y} (y-y^{-}-\varphi) +
\frac{\partial^3 G(x,0,\varepsilon)}{\partial y^3}
(y-y^{-}-\varphi)^3 + O\left((y-y^- -\varphi)^4\right).
$$
Since $\varphi\equiv \varphi(x,\varepsilon) = O(x)$, we obtain
that
$$
\begin{array}{l}
G\;\equiv\;E_1(\varepsilon) + cx +  E_2(\varepsilon)(y-y^{-}) +
d (y-y^{-})^3 + 
O\left(x^2 +
|x||y-y^-|  + (y-y^-)^4 \right),
\end{array}
$$
where~$E_i(0)=0$. Hence,  map~$T_1(\varepsilon)$ can be written
in the following form
$$
\begin {array}{l}
\overline{x} -x^{+}  =  a x + b (y-y^{-}) + O\left(x^2 +
(y-y^-)^2\right),\\
\overline{y} = E_1(\varepsilon) + c x + E_2(\varepsilon)(y-y^{-})
+
d (y-y^{-})^3 +
O\left(x^2 +
|x||y-y^-| + (y-y^-)^4 \right).
\end {array}
$$
Then the equation of the piece of $T_1(W^u_{loc})$ in~$\Pi^+$ has the
following form
\begin {equation}
\begin {array}{l}
\overline{x} -x^{+}  =   b (y-y^{-}) + O\left((y-y^-)^2\right),\\
\overline{y} = E_1(\varepsilon) + E_2(\varepsilon)(y-y^{-}) +
d(y-y^{-})^3 +  O\left((y-y^-)^4\right).
\end {array}
\label{eq:t1Wu}
\end{equation}
Condition {\bf(C)} implies that $E_i(0)=0, E_i^\prime(0)\neq 0$ and
the coefficients~$E_1(\varepsilon)$ and~$E_2(\varepsilon)$ can
take any values from the ball $\|\varepsilon\|\leq\delta_0$, where
$\delta_0>0$ is a small constant. Thus, the system
$\mu_1=E_1(\varepsilon),\mu_2=E_2(\varepsilon)$ has always a
solution and we can consider~$\mu_1$ and~$\mu_2$ as new
parameters. 
\end{proof}

Due to Lemma~\ref{nfT1}, we can consider directly  two parameter
families~$f_{\mu_1,\mu_2}$ of symplectic maps.  It is easy to see
from~(\ref{eq:t1par}) that~$\mu_1$ and~$\mu_2$ are
the splitting parameters of the manifolds $W^s(O_\varepsilon)$ and
$W^u(O_\varepsilon)$ with respect to the homoclinic point~$M^+$.
Indeed, by (\ref{eq:t1Wu}), curve~$l_u=
T_1(W^u_{loc}\cap\Pi^-)$ has the equation
\begin {equation}
\displaystyle l_{u}\;:\; y= \mu_1 + \frac{\mu_2}{b}(x-x^{+}) +
\frac{d}{b^{3}}(x-x^{+})^3 + o\left((x-x^{+})^3\right).
\label{eq:T1Wuloc}
\end{equation}
Thus, family~$f_{\mu_1,\mu_2}$ is a general two parameter
unfolding of the initial cubic tangency which occurs for
$\mu_1=\mu_2=0$.
For any such unfolding, curves~$W^s_{loc}$ and
$T_1(W^u_{loc})$ must have a quadratic tangency for certain values
of $\mu_1$ and $\mu_2$. It is true that for our family the
following result, which implies item 1 of Theorem~\ref{thm:main}, holds.
\begin{figure}[tb]
  \centerline{
  \includegraphics[width=12cm]{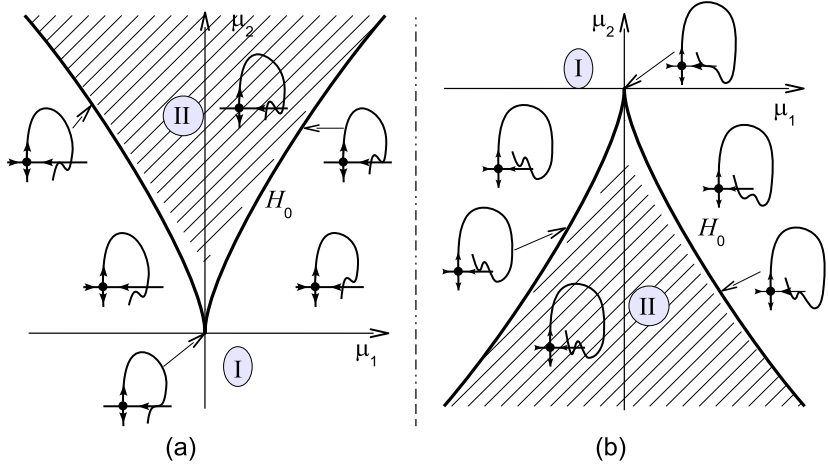}}
  \caption{{ Curve~$H_0$ in (a) incoming from above ($d<0$) and
  (b) incoming from below ($d>0$)    cases.  For $\mu\in I$, map~$f_\mu$ has only one
  transverse homoclinic orbit close to $\Gamma_0$, whereas, for $\mu\in II$, $f_\mu$ has exactly 3 transverse 
  homoclinic orbits.}}
\label{fig18-01}
\end{figure}

\begin{prpstn}
In the $(\mu_1,\mu_2)$-parameter plane, there exists a
bifurcation curve~$H_0$:
$$
\mu_1 = \pm 2d\left[-\frac{\mu_2}{3d}(1+O\left(\sqrt{|\mu_2|}\right))\right]^{3/2}
$$
(see Figure~\ref{fig18-01})
such that for $\mu\in H_0$, map~$f_\mu$ has a close to $\Gamma_0$  homoclinic orbit
with a quadratic tangency of the manifolds $W^u(O _\mu)$ and $W^s(O_\mu)$.
\label{th1g85}
\end{prpstn}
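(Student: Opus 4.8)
The plan is to recognize that, near $M^+$, a homoclinic orbit of $f_\mu$ close to $\Gamma_0$ corresponds exactly to an intersection point of $W^s_{loc}=\{y=0\}$ with the curve $l_u=T_1(W^u_{loc}\cap\Pi^-)$, and that, since $T_1$ is a $C^r$-diffeomorphism carrying the smooth curve $W^u_{loc}\cap\Pi^-$ onto $l_u$, the order of such an intersection (transverse, quadratic, cubic, \dots) coincides with the order of the homoclinic tangency of $W^u(O_\mu)$ and $W^s(O_\mu)$ at that point. So it suffices to determine, in terms of $\mu=(\mu_1,\mu_2)$, when the curve $l_u$ given by~(\ref{eq:T1Wuloc}) has a quadratic tangency with the $x$-axis; here one uses that, since $f_0$ is $C^r$ with $r\ge 5$, the remainder in~(\ref{eq:T1Wuloc}) is in fact $O((x-x^+)^4)$.

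Writing $u=x-x^+$ and $\psi(u,\mu)=\mu_1+\tfrac{\mu_2}{b}u+\tfrac{d}{b^3}u^3+O(u^4)$, so that $l_u=\{y=\psi(u,\mu)\}$, a tangency of $l_u$ with $\{y=0\}$ at $u=u_*$ means $\psi(u_*,\mu)=0$ and $\partial_u\psi(u_*,\mu)=0$, and it is \emph{quadratic} iff in addition $\partial^2_u\psi(u_*,\mu)\neq0$. From $\partial_u\psi=\tfrac{\mu_2}{b}+\tfrac{3d}{b^3}u^2+O(u^3)=0$ one gets $u^2=-\tfrac{b^2\mu_2}{3d}\bigl(1+O(u)\bigr)$, which has two real roots precisely when $\mu_2 d<0$; after the rescaling $u=|\mu_2|^{1/2}v$ this becomes a regular equation $\mathrm{sign}(\mu_2)/b+(3d/b^3)v^2+O(|\mu_2|^{1/2})=0$ whose leading part has simple nonzero roots, so the implicit function theorem gives $u_*=\pm b\bigl(-\tfrac{\mu_2}{3d}\bigr)^{1/2}\bigl(1+O(\sqrt{|\mu_2|})\bigr)$ for $\mu_2 d<0$. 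Substituting into $\psi(u_*,\mu)=0$ and using $\tfrac{d}{b^3}u_*^3=\tfrac{d}{b^3}u_*\,u_*^2=-\tfrac{\mu_2}{3b}u_*\bigl(1+O(\sqrt{|\mu_2|})\bigr)$ to eliminate the cubic term, I get $\mu_1=-\tfrac{2\mu_2}{3b}u_*\bigl(1+O(\sqrt{|\mu_2|})\bigr)=\mp\tfrac{2\mu_2}{3}\bigl(-\tfrac{\mu_2}{3d}\bigr)^{1/2}\bigl(1+O(\sqrt{|\mu_2|})\bigr)=\pm 2d\Bigl[-\tfrac{\mu_2}{3d}\bigl(1+O(\sqrt{|\mu_2|})\bigr)\Bigr]^{3/2}$, which is the claimed equation of $H_0$ (the two signs giving the two branches of the cusp). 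Finally $\partial^2_u\psi(u_*,\mu)=\tfrac{6d}{b^3}u_*\bigl(1+O(u_*)\bigr)\neq0$ whenever $u_*\neq0$, i.e. for $\mu_2\neq0$ (here $d\neq0$ by~(\ref{eq:t1-cubt}) and $b\neq0$ since $bc\equiv-1$), so the tangency is quadratic on all of $H_0\setminus\{(0,0)\}$, while at $\mu=(0,0)$ the two roots $u_*$ merge and one recovers the initial cubic tangency at the cusp point; the homoclinic orbit so obtained automatically lies in $U$ and is close to $\Gamma_0$ because the whole construction takes place inside the fixed small disks $\Pi^\pm\subset U_0$.

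The only genuinely delicate point is the \emph{uniformity} of the implicit-function-theorem step as $\mu_2\to0$, where the tangency itself degenerates; the rest is routine bookkeeping of the $O(u^4)$-remainder. This is exactly why one rescales $u=|\mu_2|^{1/2}v$: after the rescaling the equation $\partial_u\psi=0$ becomes an $O(|\mu_2|^{1/2})$-perturbation of the $\mu_2$-independent equation $\mathrm{sign}(\mu_2)/b+(3d/b^3)v^2=0$, which (for $\mu_2 d<0$) has the two simple roots $v=\pm b\bigl(-\mathrm{sign}(\mu_2)/(3d)\bigr)^{1/2}$, and it is this perturbation size that produces the correction term in the $3/2$-power expression in precisely the stated order $O(\sqrt{|\mu_2|})$.
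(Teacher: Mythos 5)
Your proposal is correct and follows essentially the same route as the paper: it sets up the tangency system $\psi(u,\mu)=0$, $\partial_u\psi(u,\mu)=0$ for the curve $l_u$ of~(\ref{eq:T1Wuloc}) against $W^s_{loc}=\{y=0\}$, solves the derivative equation for $u_*$ and substitutes back to get the semicubical cusp, with the degenerate (cubic) case only at $u_*=0$, i.e.\ $\mu=(0,0)$. Your additional checks (nonvanishing of $\partial_u^2\psi$ at $u_*\neq0$, and the rescaling $u=|\mu_2|^{1/2}v$ to make the implicit-function step uniform as $\mu_2\to0$) just make explicit details the paper leaves implicit.
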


\begin{proof} Consider curve~$l_u$ given by equation~(\ref{eq:T1Wuloc}). If the curve has a tangency with the line
$y=0$, the following system has solutions
$$
\begin{array}{l}
\displaystyle \mu_1 + \frac{\mu_2}{b}(x-x^{+}) +
\frac{d}{b^{3}}(x-x^{+})^3 +
o\left((\bar x-x^{+})^3\right) =0, \\
\displaystyle \mu_2 + \frac{3d}{b^2} (x-x^+)^2 + o\left((\bar
x-x^{+})^2\right) =0.
\end{array}
$$
We solve the second equation for $(x-x^+)$, put it into the first equation, and we obtain
the equation of curve~$H_0$. Note that the tangency is always
 quadratic except for the case $x-x^+=0$ which corresponds to a cubic tangency at~$\mu_1=\mu_2=0$.
\end{proof}

\section{On bifurcations of single-round periodic orbits}\label{sect:bifcub}

The main {goal in the present paper} is
to study bifurcations of single-round periodic orbits
of family~$f_{\mu_1,\mu_2}$. As said above, it is equivalent to the study of bifurcations of
fixed points in the first return maps~$T_k=T_1T_0^k$ for every sufficiently large integer~$k$ ($k=k_0,k_0+1,\ldots$). 
We apply the
rescaling method {in order} to find the rescaled normal forms for these maps in the following lemma.

\begin{lmm}
{\sf [Rescaling Lemma for cubic homoclinic tangencies]}\\
For every sufficiently large $k$, the first return map~$T_k$ can be
brought, by a linear transformation of coordinates and parameters,
to the following form
\begin{equation}
\begin{array}{l}
\bar{x} \; = \; y + O({|\lambda|^{k/2}}), \\
\displaystyle \bar{y} \;=\; M_1 - x + M_2y + \nu y^3 +  O({|\lambda|^{k/2}}),
\end{array}
\label{henon0-3}
\end{equation}
where
$$
\nu\; =\; \mbox{{\sf sign}}\;(d\lambda^k),
$$
\begin{equation}
\begin{array}{l}
\displaystyle M_1 = \sqrt{|d|}{|\lambda|^{-k/2} \lambda^{-k}} \left(
\mu_1 - \lambda^k(y^- -cx^+) + O\left(k\lambda^{2k}\right)\right),\\
M_2 = \lambda^{-k}\left(\mu_2 + f_{11}\lambda^k x^+ + O\left(k\lambda^{2k}\right)\right)
\end{array}
\label{mui-c}
\end{equation}
and $f_{11} = G_{xy}(0)$.
 \label{henmain-c}
\end{lmm}

\begin{proof} By Lemmas~\ref{lem:Tknfgen} and~\ref{nfT1}, {the} first return map~$T_k=T_1T_0^k$ can be written as follows
\begin{equation}
\begin{array}{rcl}
\bar x - x^+ &=& \; a{\lambda}^kx + b(y-y^-) + O\left(k\lambda^{2k} |x| + |\lambda|^k |x||y-y^-| +
(y-y^-)^2\right),  \\
\lambda^{k}\bar y (1 + k \lambda^k O(\bar x,\bar y)) &=& \;\mu_1 + \mu_2(y_1-y^-) + d(y_1-y^-)^3 +
c\lambda^kx  \\
&&+\; O\left(k\lambda^{2k} |x| + |\lambda|^k |x||y-y^-| \right) +
O\left((y-y^-)^4\right)
\end{array}
\label{tk}
\end{equation}
We shift the coordinates,
$\xi = x-x^+ + \alpha_k^1, \eta = y-y^- + \alpha_k^2$, where $\alpha_k^1 = - a\lambda^kx^+ +
O(k\lambda^{2k})$,  $\displaystyle \alpha_k^2 = - \frac{f_{12}}{3d}\lambda^kx^+ + O(k\lambda^{2k})$
and $\displaystyle f_{12}=\frac{1}{2}G_{xy^2}(0)$. Then  system (\ref{tk}) is rewritten as
follows
\begin{equation}
\begin{array}{rcl}
\bar \xi &=& a{\lambda}^k\xi  + b\eta + O\left(k\lambda^{2k} |\xi| + |\lambda|^k |\xi||\eta| + \eta^2\right), \\
 {(1 + k\lambda^k O(\bar\eta))}
\bar\eta & =& \lambda^{-k}\left(\mu_1 - \lambda^k(y^- - cx^+) +  {\rho_k^{1}
}\right)
+ \lambda^{-k}\left(\mu_2 + f_{11}\lambda^k x^+ +  {\rho_k^{2}}
 \right)\eta \\
&&+ d\lambda^{-k}\eta^3 + cx +
O\left(k\lambda^{k} |\xi| + |\xi||\eta| \right) + \lambda^{-k}O\left(\eta^4\right),
\end{array}
\label{tk+}
\end{equation}
where $\rho_k^{1,2}=O\left(k\lambda^{2k}\right)$ are small {coefficients}.
Here  the first  equation of (\ref{tk+}) does not contain constant terms and the quadratic term in
$\eta^2$  vanishes in the second equation.

Finally, by means of the coordinate rescaling
$$
\xi \;=\; b\frac{|\lambda|^{k/2}}{\sqrt{|d|}}\;\; x, \;\; \eta \;=\; \frac{|\lambda|^{k/2}}{\sqrt{|d|}}\;\; y
$$
we bring map~(\ref{tk+}) to the required form~(\ref{henon0-3}).
\end{proof}

\subsection{The description of bifurcations of fixed points in the cubic H\'enon maps}\label{sect:cubicHenon}

By the Rescaling Lemma~\ref{henmain-c}, the following conservative cubic
H\'enon maps~$\mathbf{C}_+^1 : \bar{x} = y, \; \bar{y} = M_1 - x + M_2y + y^3$ 
and~$\mathbf{C}_-^1 : \bar{x} = y, \; \bar{y} = M_1 - x + M_2y - y^3$
have to be considered as certain normal forms for the first return
maps in the cases $d\lambda^k>0$ and $d\lambda^k<0$, respectively. Thus, if $\lambda>0$, map $\mathbf{C}_+^1$
corresponds to the cubic homoclinic tangency with~$d>0$ (the { incoming from below} tangency, see Figure~\ref{fig2type}(b)),
while map~$\mathbf{C}_-^1$  is related
to the cubic homoclinic tangency with $d<0$ (the {incoming from above} tangency, see Figure~\ref{fig2type}(a)).
In Figures~\ref{GK88-+2} and~\ref{GK88--2}, the main elements of
the bifurcation diagrams for these cubic maps are presented.

The bifurcation curves in Figures~\ref{GK88-+2} and~\ref{GK88--2} are found analytically (see e.g.~\cite{GK88,DM00}) 
and their
equations are as follows, where~$\nu=+1$ and~$\nu=-1$ correspond to the maps~$\mathbf{C}_+^1$ and~$\mathbf{C}_-^1$, 
respectively:
\begin{equation}
\begin{array}{l}
\displaystyle L^+:\;\; M_1^2  = \frac{4}{27\nu}
 \left(2-M_2\right)^3  ; \\ \\
\displaystyle L^-:\;\; M_1^2 = - \frac{4}{27\nu}
\left(2+M_2\right)\left(4-M_2\right)^2 ;\\ \\
\displaystyle L_{2}^+:\;\; M_1^2 =  - \frac{4}{27}
\left(M_2+4\right)^3, \;\; \mbox{ if }\;\; \nu=+1 ;\\ \\
\displaystyle L_{2}^{+i}, i=1,2:\;\; M_1 = (-1)^i\;
2\left(\frac{M_2+4}{3}\right)^{3/2},\;\; M_2 > -\frac{4}{3}, \;\;
\mbox{ if }\;\; \nu=-1 ;
\\ \\
\displaystyle L_{2}^{-i}, i=1,2:\;\; M_1^2 = \frac{1}{216\nu}\left[ 12 +
M_2 + (-1)^i \sqrt{9M_2^2 + 24M_2}\right]^2\left[-5M_2 -12 + (-1)^i \sqrt{9M_2^2 + 24M_2}\right];
\end{array}
\label{eq:criv}
\end{equation}
where 
if $(M_1,M_2)\in L^+$, then there exists
a fixed point with double {eigenvalue} $+1$; if $(M_1,M_2)\in L^-$, there exists a fixed point with double {eigenvalue}~$-1$;
if $(M_1,M_2)\in L_{2}^+$, there exists a 2-periodic orbit with double {eigenvalue}~$+1$;
if~$(M_1,M_2)\in L_{2}^-$,
there exists a 2-periodic orbit with double {eigenvalue}~$-1$.

The main bifurcations related to fixed points are as follows.

$\;$\\
\textbf{Bifurcation scenario in map~$\mathbf{C}_+^{\;1}$, see Figure~\ref{GK88-+2}.} 
Bifurcation curves~$L^+$ and~$L^-$ from~(\ref{eq:criv}) with~$\nu =+1$ divide the~$(M_1,M_2)$-parameter plane into 3 regions
symmetric with respect to the~$M_2$-axis.
{When parameters $M_1$ and $M_2$ are in region~${\bf 1}$},
map~$\mathbf{C}_+^{\;1}$  has a unique fixed point~$p_1$ which is a saddle-plus
(with {eigenvalues} $\lambda$ and $\lambda^{-1}$, where $0<\lambda<1$).
The transition ${\bf 1}\Rightarrow{\bf 2}$ corresponds to the  {birth} of two (saddle and elliptic)
fixed points.
At $(M_1,M_2) = b_1$, the fixed point $p_1$ is a non-hyperbolic saddle with double {eigenvalue}~$+1$,
and then this point falls into three (two saddle and one elliptic) fixed points  in domain~${\bf 2}$
under a conservative pitch-fork bifurcation.
The transition ${\bf 2}\Rightarrow{\bf 3}$ corresponds to a nondegenerate period doubling
bifurcation of the elliptic fixed point. Thus, for region ${\bf 3}$, map~$\mathbf{C}_+^{\;1}$ has
three saddle (two saddle-plus and one saddle-minus) fixed points  and one elliptic 2-periodic orbit.
Further primary bifurcations, when crossing curves~$L_2^+$ and~$L_2^-$, are related  
to conservative fold and period doubling bifurcations of  2-periodic orbits, respectively,
and we do not  discuss them here (see more details in~\cite{GK88,DM00}).
\begin{figure} 
  \centerline{
  \includegraphics[width=16cm]{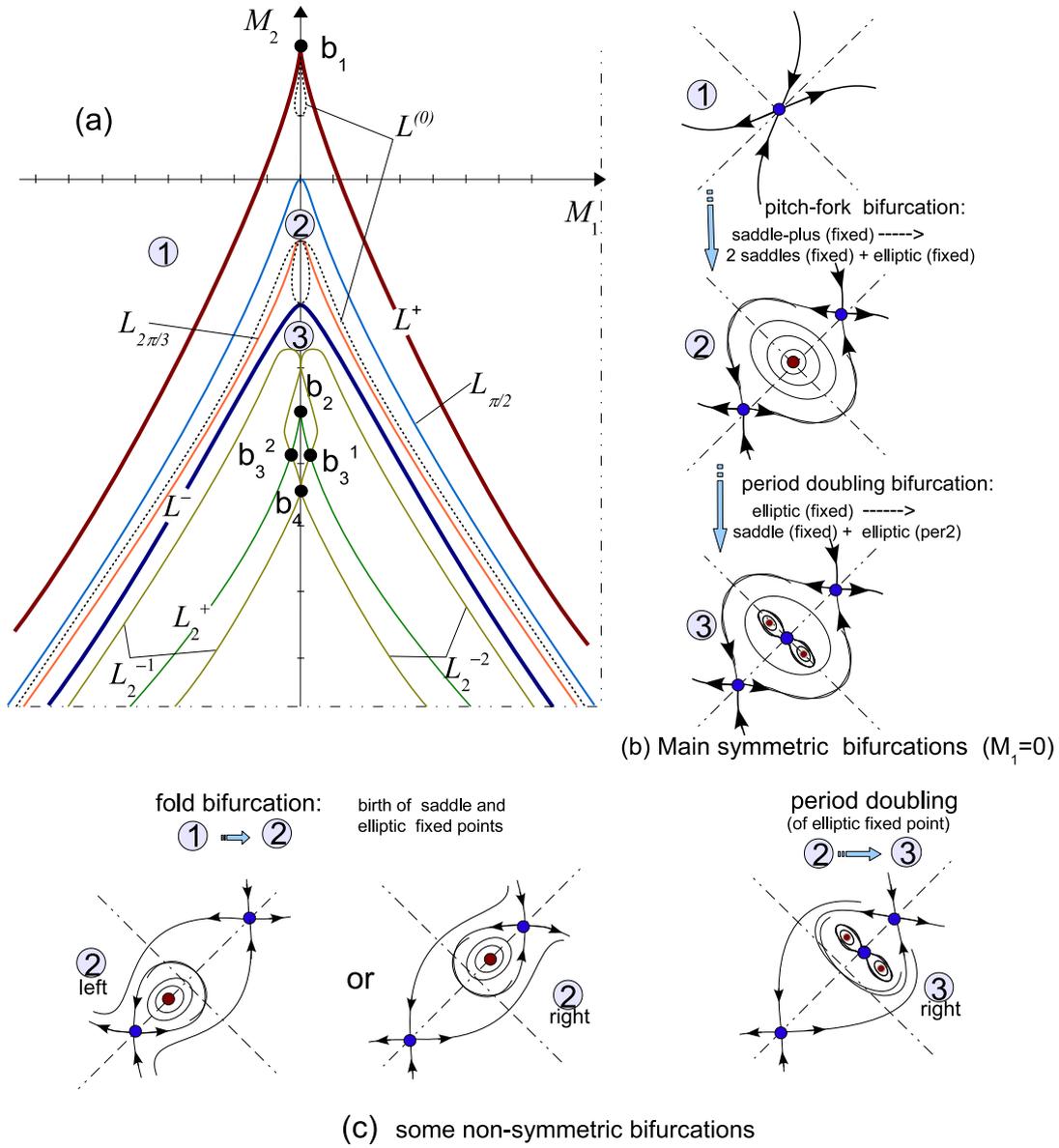}}
  \caption{{{
(a) The main elements of the bifurcation diagram for map~$\mathbf{C}_+^{\;1}$. The equations of bifurcation curves~$L^+$,
$L^-$ are given by~(\ref{eq:criv}) with~$\nu=1$. The equation of~$L^{(0)}$ is given 
by~(\ref{eq:twistless}) with~``+''. 
The codimension 2 bifurcation
points~$b_i$
are as follows: $b_1$  --  a  nonhyperbolic saddle fixed point with double {eigenvalue}~$+1$ exists;
  $b_2$ and $b_4$  --  two 2-periodic cycles  with double {eigenvalue}~$-1$ coexist;
  $b_3^{1,2}$  --  two 2-periodic cycles, one with double {eigenvalues}~$-1$ and the other with double {eigenvalues}~$+1$, coexist.
  Examples of (b) symmetric ($M_1=0$) and
(c) asymmetric~$(M_1\neq 0)$ bifurcations are also shown.}} }
   \label{GK88-+2}
\end{figure}

\begin{figure}
  \centerline{
  \includegraphics[width=16cm]{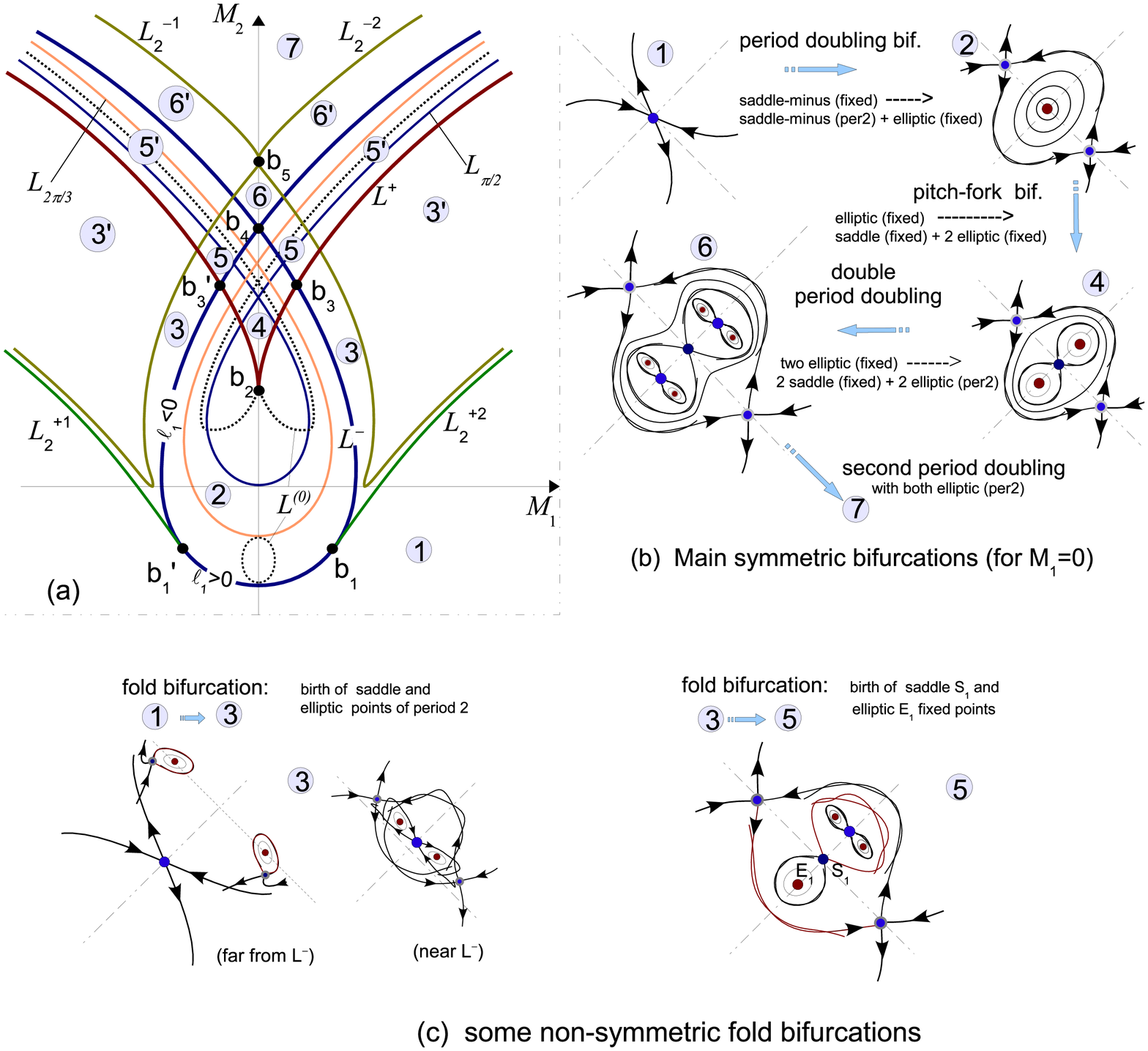}}
  \caption{{ {(a) The main elements of the bifurcation diagram for map~$\mathbf{C}_-^{\;1}$. 
  The equations of bifurcation curves~$L^+$ and~~$L^-$ are given by~(\ref{eq:criv}) with~$\nu=-1$. The equation of $L^{(0)}$ is given 
by~(\ref{eq:twistless}) with~``$-$''.
The codimension 2 bifurcation points~$b_i$
  are as follows: $b_1$ and $b_1^\prime$   --
  there exists a fixed point with {eigenvalues}~$(-1;-1)$ whose first Lyapunov value is zero but 
  second is nonzero, these points divide curve~$L^-$ into the segments where the corresponding period doubling bifurcation is either subcritical (when crossing the piece $(b_1,b_1^\prime)$ of~$L^-$) or supercritical (crossing outside $(b_1,b_1^\prime)$); $b_2$  -- a triple fixed
  point of elliptic type exists; $b_3$  and $b_3^\prime$  -- two fixed points with {eigenvalues}
  $(-1,-1)$  and $(+1,+1)$ coexist; $b_4$  --  two fixed points with {eigenvalues}
  $(-1,-1)$ coexist; $b_5$ -- two 2-periodic orbits with {eigenvalues}
  $(-1,-1)$ coexist.}} }
   \label{GK88--2}
\end{figure}

$\;$\\
\textbf{Bifurcation scenario in map~$\mathbf{C}_-^{\;1}$, see Figure~\ref{GK88--2}.} 
Bifurcation curves~(\ref{eq:criv}) with~$\nu = -1$
divide the~$(M_1,M_2)$-parameter plane into 15 regions including the regions ${\bf 1}$, $\textbf{2}$, $\textbf{4}$, 
$\textbf{6}$ and
$\textbf{7}$ which are symmetric with respect
to the~$M_2$-axis, while the other regions are pairwise symmetric (e.g $\textbf{3}$ and $\textbf{\underline3}$).
For {$(M_1,M_2)$ belonging to region~${\bf 1}$}, map~$\mathbf{C}_-^{\;1}$ has a unique fixed point~$q_1$
 which is a saddle-minus
(with {eigenvalues} $\lambda$ and $\lambda^{-1}$, where $-1<\lambda<0$). The transition
${\bf 1}\Rightarrow{\bf 2}$ through the segment $(b_1,b_1^\prime)$ of curve~$L^-$ corresponds
to a subcritical period doubling bifurcation with the point~$q_1$: the point becomes elliptic
and a saddle 2-periodic orbit emerges in its neighborhood. The transition ${\bf 1}\Rightarrow{\bf 3}$ (and symmetrically ${\bf 1}\Rightarrow{\bf \underline{3}}$) corresponds to a nondegenerate conservative fold 
bifurcation under which saddle and elliptic 2-periodic orbits are born.
The transition ${\bf 3}\Rightarrow{\bf 2}$ (as well as ${\bf \underline{3}}\Rightarrow{\bf 2}$) corresponds to
a supercritical period doubling bifurcation under which the elliptic 2-periodic orbit merges with the saddle fixed point and
becomes an elliptic fixed point.
Thus, in domain~${\bf 2}$, map~$\mathbf{C}_-^{\;1}$
has an elliptic fixed point and  a saddle 2-periodic orbit.
The transitions ${\bf 3}\to {\bf 3'}, {\bf 5}\to {\bf 5'}$ and ${\bf 6}\to {\bf 6'}$  (and, symmetrically, 
${\bf\underline 3}\to {\bf\underline{ 3'}}, {\bf \underline5}\to {\bf\underline{ 5'}}$ and 
${\bf 6}\to {\bf\underline{ 6'}}$) correspond to a period doubling of the elliptic 2-periodic orbit 
(it becomes a saddle 2-periodic orbit and an elliptic 4-periodic orbit is born in its neighborhood).
We also illustrate bifurcations
occurring at a passage {around} the point $b_1$ in Figure~\ref{GM--2} (and, symmetrically, for
the point~$b_1^\prime$).
 Transitions through curve~$L^+$ at increasing~$M_2$
correspond to the appearance of
two new (saddle and elliptic) fixed points under a conservative fold bifurcation. Thus, in region~${\bf 4}$, 
map~$\mathbf{C}_-^{\;1}$ has three fixed points, two elliptic and one saddle.
The elliptic fixed points undergo a period doubling bifurcation at transitions
${\bf 4}\Rightarrow{\bf\underline 5}$ and ${\bf 5'}\Rightarrow{\bf 6'}$ -- for one of the points, or  
${\bf 4}\Rightarrow{\bf 5}$ and ${\bf\underline{5'}}\Rightarrow{\bf\underline{ 6'}}$ -- for the other point.
We note that for  $(M_1,M_2)=b_2$, there exists a triple (of elliptic type) fixed
 point which splits into three (two elliptic and one saddle) fixed points in region~${\bf 4}$.
\begin{figure} [tb]
  \centerline{
  \includegraphics[width=12cm]{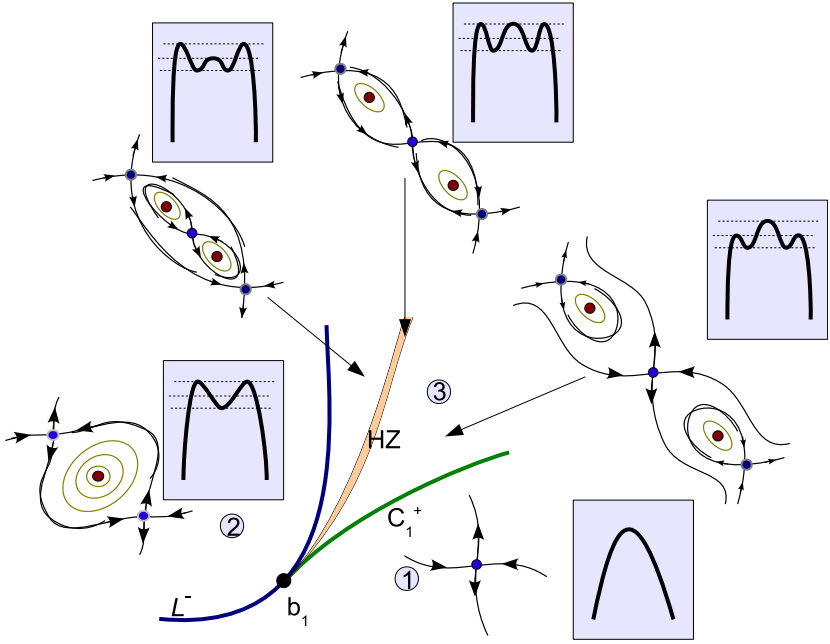}}
  \caption{{ Bifurcations near point~$b_1$ of the bifurcation diagram in Figure~\ref{GK88--2}.
Region~$HZ$ (homo/heteroclinic zone) corresponds to the values of~$(M_1,M_2)$ where the invariant manifolds of
all saddles intersect. In a rough approximation, these bifurcations are similar to those of
a two-dimensional Hamiltonian system whose potential function is symmetric and changes  as shown.}}
   \label{GM--2}
\end{figure}

\subsection{{On genericity of elliptic fixed points for the conservative cubic H\'enon maps.}} \label{sec:genellpoint}

In the~$(M_1,M_2)$-parameter plane, bifurcation curves~$L^+$ and~$L^-$ bound those open regions of parameter values 
where elliptic fixed
points of  maps~$\mathbf{C}_+^{\;1}$ and~$\mathbf{C}_-^{\;1}$ exist. Such a fixed point is called {\em generic} if, first, 
$\varphi\neq 0,\pi,\pi/2,2\pi/3$, i.e.
no strong resonances occurs, and, second, in the main complex normal form
\begin{equation}
\bar z = e^{i\varphi} z + i B_1 z |z|^2 + O(|z|^4)
\label{eq:ellgen}
\end{equation}
the first Birkhoff coefficient~$B_1$ is nonzero. Note that if an elliptic point is generic, then it is stable. However,
non-generic elliptic points can be both stable (of elliptic type) and unstable (of saddle type, for example, a saddle 
with 6 or 8 separatrices
if $\varphi = 2\pi/3$ or $\varphi = \pi/2$). 

The following curve
\begin{equation}
L_\varphi\;:\; M_1^2 = \pm \frac{4}{27}(M_2+\cos\varphi -3)^2(2\cos\varphi - M_2)
\label{eq:ellfp}
\end{equation}
in the~$(M_1,M_2)$-parameter plane defines those parameter values where map~$\mathbf{C}_\pm^{\;1}$ has a fixed point 
$E_\varphi$ with {eigenvalues} $e^{\pm i\varphi}$ with $0<\varphi<\pi$. Accordingly, curves~$L_{2\pi/3}$ and~$L_{\pi/2}$  
of strong 1:3 and 1:4 resonances are resulted from~(\ref{eq:ellfp}) for $\cos\varphi =0$ and $\cos\varphi =-1/2$, respectively.

To find the conditions when the point $E_\varphi$ is non-generic, i.e. $B_1(E_\varphi)=0$,  we use 
the formula for $B_1$ 
from \cite{TurVered}, see formula~(35) there. Then we obtain that $B_1=0$ for
\begin{equation}
M_2 = \frac{6\cos^2\varphi + 3\cos\varphi +1}{1+ 4\cos\varphi}.
\label{eq:ellb0}
\end{equation}
This formula is true for both maps~$\mathbf{C}_{+}^{\;1}$ and~$\mathbf{C}_-^{\;1}$.

Equations~(\ref{eq:ellfp}) and~(\ref{eq:ellb0}) define in the~$(M_1,M_2)$-parameter plane 
the so-called non-twist curve~$L^{(0)}$ such that for~$(M_1,M_2)\in L^{(0)}$, map~$\mathbf{C}_\pm^{\;1}$ 
has a degenerate elliptic fixed point (with~$B_1=0$). These curves are shown in Figures~\ref{GK88-+2} and~\ref{GK88--2}. 
For both maps, the curves consist of two connected pieces, the infinite branches of~$L^{(0)}$ asymptotically tend  
to the curve~$L_\varphi$ with $\varphi = \arccos (-1/4)$. In the case of map~$\mathbf{C}_-^1$, one of the branches of
the curve $L^{(0)}$
has a self-intersection point $(M_1=0; M_2 = 3.2)$,  
where map~$\mathbf{C}_{-}^{\;1}$ 
has simultaneously two degenerate elliptic fixed points (both with~$\cos\varphi = -1/5$).
For map~$\mathbf{C}_+^1$, there also exists a self-intersection point ($M_1=0,M_2=-1$) of curve~$L^{(0)}$. This point corresponds to the existence of a fixed
point with {eigenvalues}~$e^{\pm i 2\pi/3}$ (1:3 resonance), which is KAM-stable in this case,~\cite{DM00}. Note that, in the case of map~$\mathbf{C}_-^1$, the fixed point~$O(0,0)$ at $M_1=0,M_2=-1$ has {eigenvalues}~$e^{\pm i 2\pi/3}$ and is also KAM-stable (here~$L^{(0)}$ and~$L_{2\pi/3}^-$ are tangent). 
Note that the fixed point~${ E}_\varphi$ is always nondegenerate (generic) if $-{1}/{4}\cos\varphi < {1}/{6}$ in 
the case of map~$\mathbf{C}_{+}^{\;1}$ and if $-{1}/{2}<\cos\varphi < -{1}/{4}$  in the case of map~$\mathbf{C}_{-}^{\;1}$.
We also note that curves~$L^{(0)}$ can be written in the explicit form~\cite{DM00} as follows
\begin{equation}
729 M_1^4 \pm (8 M_2^3 - 108 M_2^2 - 63 M_2 + 837) M_1^2 - \frac{16}{27} (M_2 + 1)(5M_2 -16)^2(M_2 - 2)^3 = 0.
\label{eq:twistless}
\end{equation}

\subsection{End of proof of  Theorem~\ref{thm:main}} \label{sec:endofproof}

Now we can complete the proof of Theorem~\ref{thm:main}. Indeed, we only need to translate the results on 
the bifurcation diagrams obtained in the~$(M_1,M_2)$-plane for the rescaling maps~$\mathbf{C}_+^{\;1}$ and~$\mathbf{C}_-^{\;1}$ 
into the initial~$(\mu_1,\mu_2)$-parameter plane. We can easily make this using relations~(\ref{mui-c}) 
between the initial and rescaling parameters. Then, using
equations~(\ref{eq:criv}) for curves~$L^+$, $L^-$ and~$L_{2}^{+}$, we obtain the following equations for the curves in 
the~$(\mu_1,\mu_2)$-plane, where $\nu = \mbox{{\sf sign}}(d\lambda^k)$,
\begin{equation}
\begin{array}{l}
\displaystyle L_k^+:\;\; \mu_1  = \lambda^k(y^- -cx^+ +\dots) \pm
\frac{2}{\sqrt{|d|}} \left(\frac{(2-f_{11}x^+)\lambda^k-\mu_2}{3\nu}\right)^{3/2}(1+\dots);\\ 
\displaystyle L_k^-:\;\; \mu_1  = \lambda^k(y^- -cx^+ +\dots) \pm
\frac{2}{3\sqrt{|d|}}\left(\frac{-(2+f_{11}x^+)\lambda^k-\mu_2}{3\nu}\right)^{1/2} 
 \left((4-f_{11}x^+)\lambda^k-\mu_2\right)(1+\dots);\\
\displaystyle L_{2k}^{+}:\;\; \mu_1  = \lambda^k(y^-
-cx^+\dots)\pm \frac{2}{\sqrt{|d|}}  \left(\frac{-(4+f_{11}x_1^+)\lambda^k-\mu_2}{3}\right)^{3/2}(1+\dots), 
\text{ if }\;\; 
\nu=+1;   \\
\displaystyle L_{2k}^{+i}: \left\{ \begin{array}{l}
\displaystyle \mu_1  = \lambda^k(y^-
-cx^+\dots)\; + (-1)^i\; \frac{2}{\sqrt{|d|}} \left(\frac{(4+f_{11}x^+)\lambda^k+\mu_2}{3}\right)^{3/2}(1+\dots),\;\;\\
\displaystyle \mbox{where}\;\; \mu_2 +  f_{11}x^+\lambda^k \geq -\frac{4}{3}\lambda^k\;\; \mbox{and}\;\; i=1,2, \;\;
\mbox{if}\;\; \nu = -1.  
\end{array}
\right.
\end{array}
\label{eq:criv+k}
\end{equation}

Analogous formulas can be obtained for the other bifurcation curves, given in~(\ref{eq:criv}),~(\ref{eq:ellfp}) for 
$\varphi =\pi/2, 2\pi/3$, and~(\ref{eq:twistless}).
This completes the proof of Theorem~\ref{thm:main}.

\section{On 1:4 resonance in the cubic H\'enon maps} \label{sec:14rez}

The structure of the strong 1:3 resonance, i.e. the structure of bifurcations related to the existence of fixed points 
with {eigenvalues}~$e^{\pm i 2\pi/3}$, in the case of  cubic maps~$\mathbf{C}_+^{\;1}$ and~$\mathbf{C}_-^{\;1}$ was studied 
in~\cite{DM00}.
In this section we analyze bifurcations related to the existence of fixed points with {eigenvalues}~$e^{\pm i \pi/2}$
(the main 1:4 resonance) in  maps~$\mathbf{C}_+^{\;1}$ and~$\mathbf{C}_-^{\;1}$.
We find conditions of nondegeneracy of the corresponding resonances
and give a description of accompanying bifurcations including {the main bifurcations in the degenerate cases.

Among the latter bifurcations we {pay} main attention to bifurcations of symmetric 
 4-periodic points, 
i.e. such 4-periodic orbits which has two points on the symmetry line~$R:\{x=y\}$. We will call such orbits~$R$-symmetric. 
Note that both maps~$\mathbf{C}_+^{\;1}$ and~$\mathbf{C}_-^{\;1}$ are reversible with respect to 
the involution~${\cal L}:\{x\to y,y\to x\}$, i.e. the maps preserve the form if 
one makes the coordinate change~${\cal L}$ and consider the inverse map. The line~$R$ is the line of 
fixed points of involution~${\cal L}$ and, thus,~$R$-symmetric periodic orbits can undergo
nondegenerate bifurcations of two types: symmetry preserving parabolic (hold) bifurcations and 
symmetry breaking pitch-fork bifurcations.  In the conservative case, there are two types of 
pitch-fork bifurcations: (i) when a symmetric elliptic fixed (periodic) point becomes a symmetric saddle fixed point 
and a pair of elliptic fixed points (symmetric each other with respect to~$R$) is born, and (ii) when a symmetric saddle 
fixed (periodic) point becomes a symmetric elliptic fixed point and a pair of symmetric each other saddle fixed points is born. 

We note that,
in the case under consideration, there are nonsymmetric 4-periodic orbits whose structure and bifurcations can be important 
to understand some thin details of 1:4 resonance. In this case, we study these orbits only numerically. 

\subsection{1:4 resonance in  map~$\mathbf{C}_+^{\;1}$.} \label{sec:14rez+}

The map~$\mathbf{C}_+^{\;1}$
has a unique fixed point with {eigenvalues}~$e^{\pm i \pi/2}$
at values of parameters~$M_1$ and~$M_2$ belonging to the curve
$$
L^+_{\pi/2}\;:\; M_1^2 = -\frac{4}{27}M_2 (M_2 -3)^2.
$$
Its equation is obtained from~(\ref{eq:ellfp}) with the sign ``$+$'' and $\cos\varphi=0$.

It was found in~\cite{MGon05} that the complex local normal form~(\ref{HeComplNew13}) near such a point has 
the following coefficients
$$
\displaystyle 8B_1(0) = 3 - 3 M_2,\;\;\;
8B_{03}(0)= 1 + 3 M_2
$$
and, thus, in~(\ref{D21D03})
$$
\displaystyle A   
= \frac{|3 - 3 M_2|}{|1 + 3 M_2|}.
$$
Since $M_2\leq 0$ in the case of curve~$L_{\pi/2}^+$, we always have 
$A>1$. Thus, in~(\ref{D21D03}) only condition $|B_{03}(0)|\neq 0$ is violated at~$M_2 = -1/3$.

The main local bifurcations which occur here at transition of the parameters
through curve~$L_{\pi/2}^+$ (except for the two points with~$M_2 = -1/3$) are illustrated in Figure~\ref{fig:res1_4chm1}. 
Curve~$L_{\pi/2}^+$  divides the~$(M_1,M_2)$-parameter plane into
two regions~I and~II. In both regions, the fixed point is generic elliptic, however, it is not generic for 
$(M_1,M_2)\in L_{\pi/2}^+$ although
the point is of elliptic type. The transition
from~I into~II is accompanied by the appearance of a resonant 1:4 chain containing 
the saddle and elliptic 4-periodic cycles around the fixed point.

\begin{figure}[tb]
\centering
\includegraphics[width=16cm]{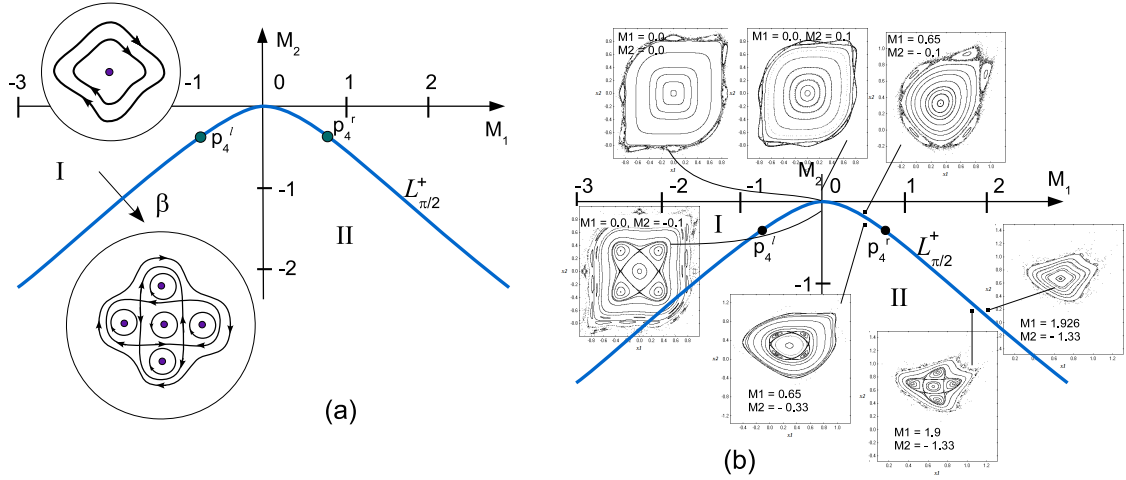}
\caption{{ Local bifurcations at transition
of the parameters through curve~$L_{\pi/2}^+$ for (a) the flow normal form~(\ref{HeComplNew13}); 
(b)  for~map $\mathbf{C}_+^{\;1}$,
the phase portraits are obtained numerically.}}
\label{fig:res1_4chm1}
\end{figure}

We note that in curve~$L_{\pi/2}^+$ there are two points~$p_4^l$ and~$p_4^r$ with~$M_2 = -1/3$, where~$B_{03} =0$.
Now we consider in more detail a neighborhood of the point~$p_4^r$ (point~$p_4^l$ can be studied in 
the same way). We find that~$p_4^r$ is an endpoint of two bifurcation curves~$B_4^r$ and~$\tilde B_4^r$.
These two curves divide domain~II into 3 domains,~II$_a$, II$_b$ and~II$_c$. In domain~II$_a$, the 1:4 chain looks as 
in Figure~\ref{pi2plusres} for $M_1 = 0.7$, $M_2=-0.5$. Thus, the chain contains the~$R$-symmetric elliptic 
4-periodic orbit (whose two points belong to the bisecting line~$x=y$). 
At crossing the line~$B_4^r$ (from~II$_a$ to~II$_b$), the elliptic orbit undergoes a pitch-fork bifurcation: 
it becomes a symmetric saddle 4-periodic orbit and a pair of symmetric each other elliptic 4-periodic orbits 
emerges. When the parameters change in  zone~II$_b$, some
bifurcations connected with the reconstruction of homo/heteroclinic structures take place, and, finally,
the saddle 4-periodic orbit of the chain which is not~$R$-symmetric  undergoes a
pitch-fork bifurcation at crossing curve~$\tilde B_4^r$ : two elliptic 4-periodic orbits merge into it and 
the orbit becomes elliptic. 
Thus, in domain~II$_c$, the 1:4 chain looks as in Figure~\ref{pi2plusres} for $M_1 = 0.75, M_2=-0.5$.

\begin{figure}[tb]
\centering
\includegraphics[width=14cm]{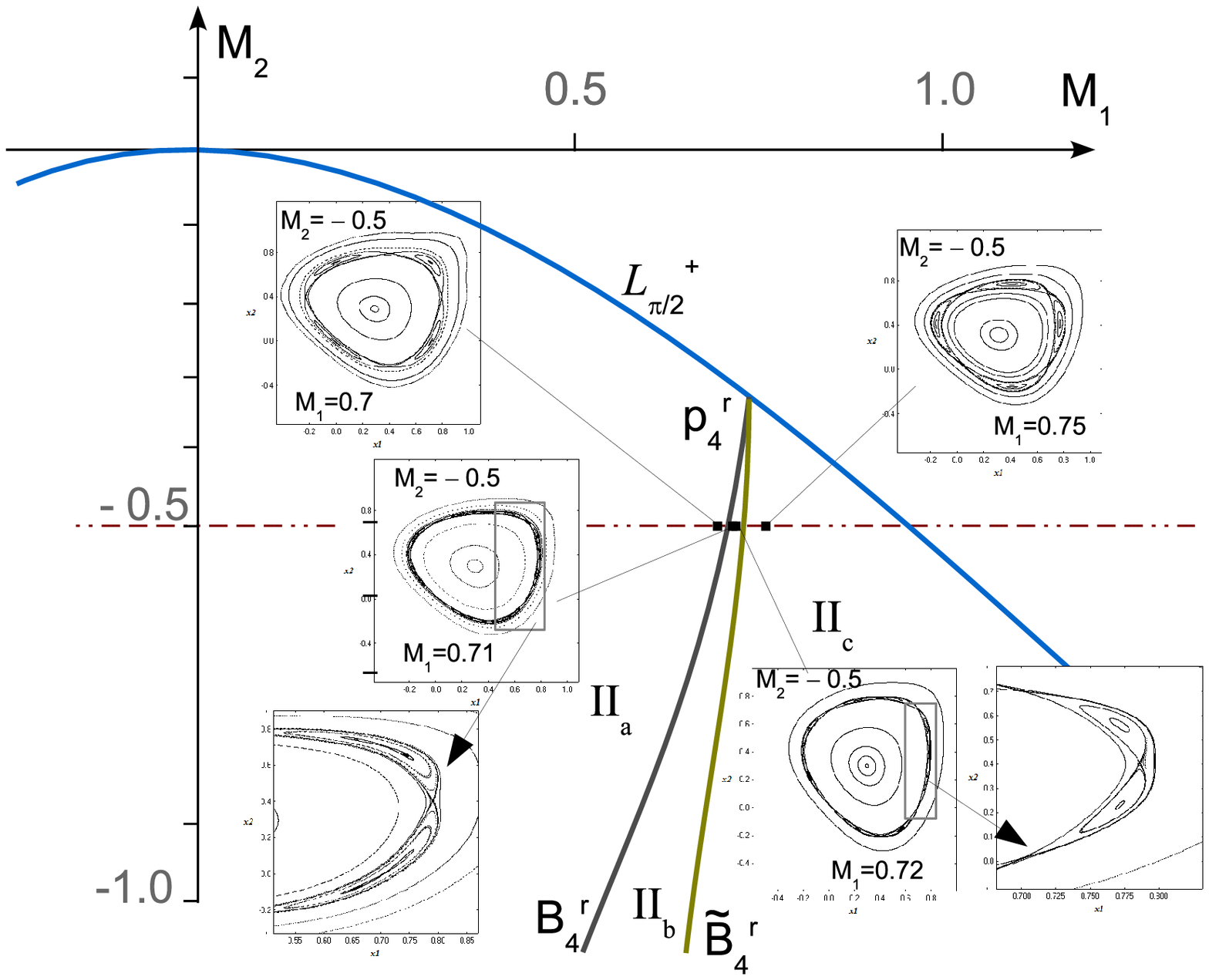}
\caption{{{ A fragment of bifurcation diagram at crossing curves~$B_4^r$ and~$\tilde B_4^r$ when~$M_1$ changes 
for a fixed~$M_2=-0.5$.
Effect of thinning of the 1:4 resonance chain is observed and some accompanied pitch-fork bifurcations 
with 4-periodic orbits are shown.
}}}
\label{pi2plusres}
\end{figure}

In Figure~\ref{res14plus} we interpret these bifurcations by means of the following flow normal form 
(which is invariant under rotation by angle $\pi/2$)
\begin{equation}
\displaystyle
\dot \zeta =  - 4 i \left( -\beta \zeta +
B_1 \zeta |\zeta|^2  +  \mu \zeta^{*3} +  \hat A \zeta^5 + B_2|\zeta|^4 \zeta + \hat C |\zeta|^2 \zeta^{*3} +  O(|\zeta|^7) \right), 
\label{nf1b30}
\end{equation}
where $\beta$ and $\mu$ are small real parameters, coefficients $B_1, B_2, \hat A$ and~$\hat C$ are real and $5 \hat A \equiv \hat C $ (in this case the divergence of~(\ref{nf1b30}) is zero,~\cite{Bir87}).
Note that system~(\ref{nf1b30}) is also reversible, it is invariant under the time reversal 
$t\to -t$ and the following involutions: ${\cal R}_1:\{x\to y, y\to x\}$, 
${\cal R}_2:\{x\to - y, y\to -x\}$, ${\cal R}_3:\{x\to -x, y\to y\}$ and 
${\cal R}_4: \{x\to x, y\to -y\}$. Accordingly, if~$B_1\beta<0$, the system has 8 nontrivial equilibria lying in pairs in the 
4 lines ${\cal R}_1,...,{\cal R}_4$ of fixed points of the corresponding involutions.    
\begin{figure}[tb]
\centering
\includegraphics[width=16cm]{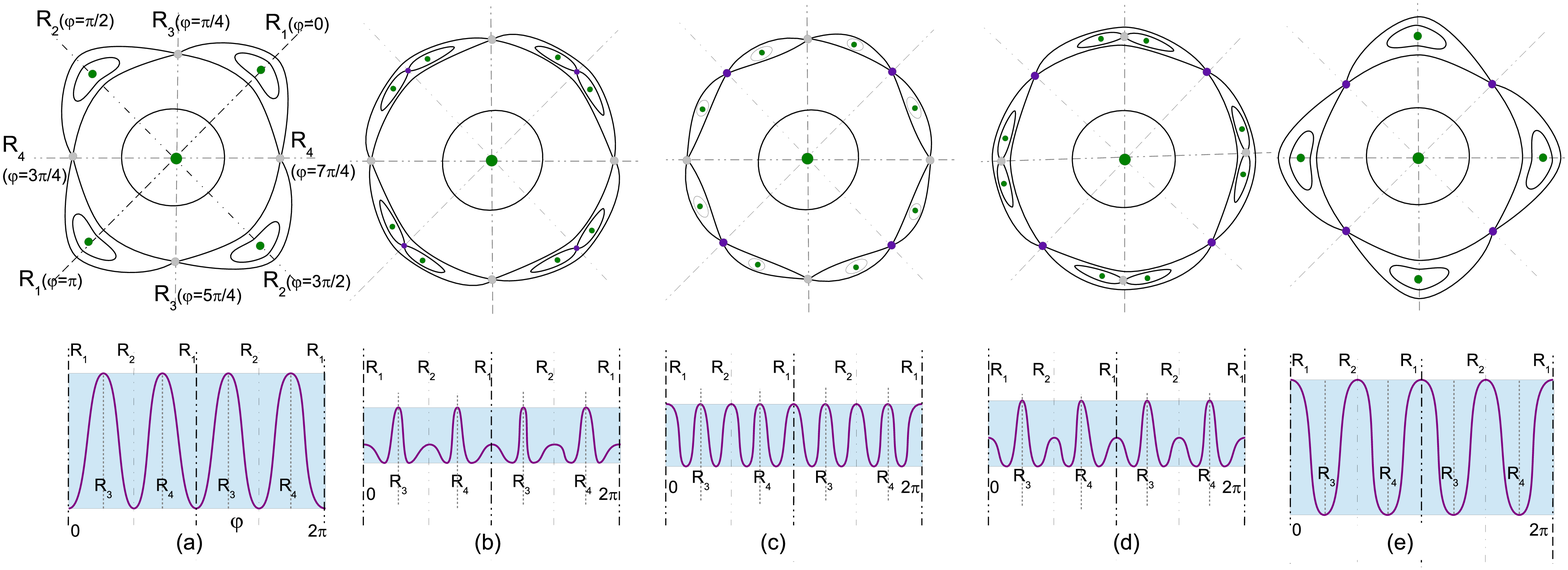}
\caption{{ The main steps of bifurcations inside the 1:4 resonance chain (upper plots), and the corresponding evolution of the appropriate cyclic
potential function (lower plots)
for the Hamiltonian flow~(\ref{nf1b30})  when~$\mu$ changes and~$B_1\beta<0$ is fixed, 
related to the cases
(a)  
$M_1=0.7$, (b) $M_1=0.71$, (c) a critical value $0.71<M_1<0.72$ corresponding to the homo/heteroclinic zone, (d) $M_1=0.72$, (e) $M_1=0.75$.
Here,~$R:\{x=y\}$ and~$R^*:\{x=-y\}$ are the lines of fixed points of the corresponding involutions. 
In the lower plots for the potential function, we need to identify the 
left and rights hand sides ($0$ and $2\pi$) of the bands.
}}
\label{res14plus}
\end{figure}

We see that the theoretical results, Figure~\ref{res14plus}, are in good accordance with the numerical evidence  
for a parameter path~$M_2 = -0.5$ in Figure~\ref{pi2plusres}.
Note that in Figure~\ref{res14plus} the case~(c) approximates the homo/heteroclinic zone of domain~II$_b$, where the invariant manifolds of the saddles are reconstructed
from the~(b)-configuration to the~(d)-configuration. Simultaneously when the parameters change in 
domain~II$_b$, we observe the effect of thinning of the 1:4 resonance chain. This is connected with the fact that 
the normal form~(\ref{nf1b30}) with~$\mu=0$ and~$C=0$ 
corresponds to a nonlinear center.

\subsection{1:4 resonance in map~$\mathbf{C}_-^{\;1}$ } \label{sec:14rez-}

The cubic map~$\mathbf{C}_-^{\;1}$ has a fixed point with {eigenvalues}~$e^{\pm i \pi/2}$ for
$(M_1,M_2) \in L_{\pi/2}^-$, where curve~$L_{\pi/2}^-$ has equation
\begin{equation}
L^-_{\pi/2}\;:\; M_1^2 = \frac{4}{27}M_2 (M_2 -3)^2.
\label{eq:res-}
\end{equation}
This equation is obtained from~(\ref{eq:ellfp}) with the sign ``$-$'' and $\cos\varphi=0$.

Note that curve~$L_{\pi/2}^-$ has a self-intersection point ($M_1=0,M_2=3$),
and only in this
moment the map has simultaneously two fixed points
with {eigenvalues}~$e^{\pm i \pi/2}$.
The coefficients of the local complex normal form~(\ref{HeComplNew13})
are,~\cite{MGon05},
$$
8 B_1 = -3 + 3 M_2,\;
8 B_{03}= -1 - 3 M_2.
$$
Since $M_2\geq 0$ in curve~$L_{\pi/2}^-$, in~(\ref{D21D03}) we have   
$B_{03} <0$ and 
$$
\displaystyle A =  \frac{|3 - 3 M_2|}{|1 + 3 M_2|}.
$$
\begin{figure}[tb]
\centering
\includegraphics[height=8cm]{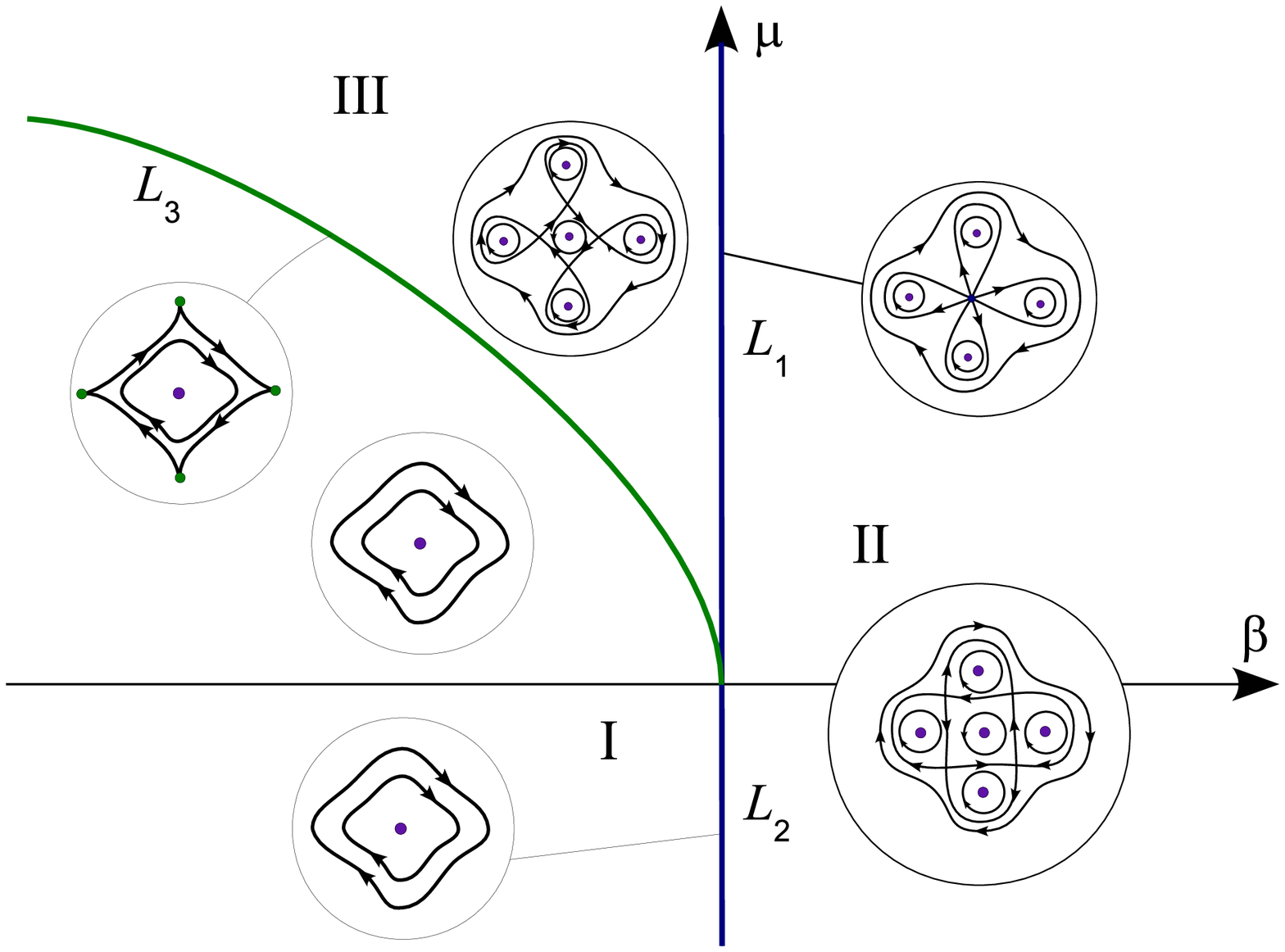}
\caption{{ Main elements of bifurcation diagram in the case $A=1$.}}
\label{fig_pi2-A1}
\end{figure}
Thus, both  cases~$A>1$ (when $\displaystyle 0\leq M_2 < {1}/{3}$) and~$A<1$ (when $\displaystyle M_2 > {1}/{3}$) are possible here. In~$L_{\pi/2}^-$, there are two points 
$P^+(M_1 = 16/27, M_2 = {1}/{3})$  and $P^-(M_1 = -16/27, M_2 = {1}/{3})$
where the case~$A=1$ takes place.
Note that if~$A<1$, the fixed point $O$ is a saddle with 8 separatrices for
 $(M_1,M_2)\in L_{\pi/2}^-$,  and the main bifurcations are connected with the reconstruction of a saddle 4-periodic 
 orbit (as at the transition between the domains~II and~III from the bifurcation diagram of Figure~\ref{fig_pi2-A1}).

In the degenerate case~$A=1$, the main bifurcations can
be described by means of a two parameter family of the Hamiltonian system
\begin{equation}
\displaystyle
\dot \zeta =  i \beta \zeta +
i(1+\mu) \zeta |\zeta|^2 + i \zeta^{*3}  + i B_2|\zeta|^4\zeta   +  i C |\zeta|^2 \zeta^{*3} +  O(|\zeta|^7),
\label{2p5_2}
\end{equation}
where  $\beta$ and $\mu$ are parameters, $B_2$ and $C$ are real coefficients, see more details in~\cite{Bir87,Gelf,SV09}.
The typical bifurcation diagram\footnote{Note that the nondegeneracy conditions in this case include also 
the inequality $B_2 \neq C$,~\cite{Bir87}.}
for family~(\ref{2p5_2}) is shown in Figure~\ref{fig_pi2-A1}.
It contains three bifurcation curves which divide the~$(\beta,\mu)$-parameter plane into 3 domains. Crossing the curves
$L_1:\{\beta=0,\mu>0\}$ and  $L_2:\{\beta=0,\mu<0\}$ corresponds to the reconstruction of nonzero saddle equilibria. Curve~$L_3$ corresponds to the appearance of 4 nonzero parabolic equilibria (this bifurcation is nondegenerate for 
the family~(\ref{2p5_2}) since it is invariant under rotation by angle~$\pi/2$). Note that the trivial equilibrium is a
nondegenerate conservative center for~$\beta \neq 0$; it is a degenerate conservative center for
$(\beta,\mu)\in L_2$ and a saddle with 8 separatrices for~$(\beta,\mu)\in L_1$.}
\begin{figure}[tb]
\centering
\includegraphics[height=14cm]{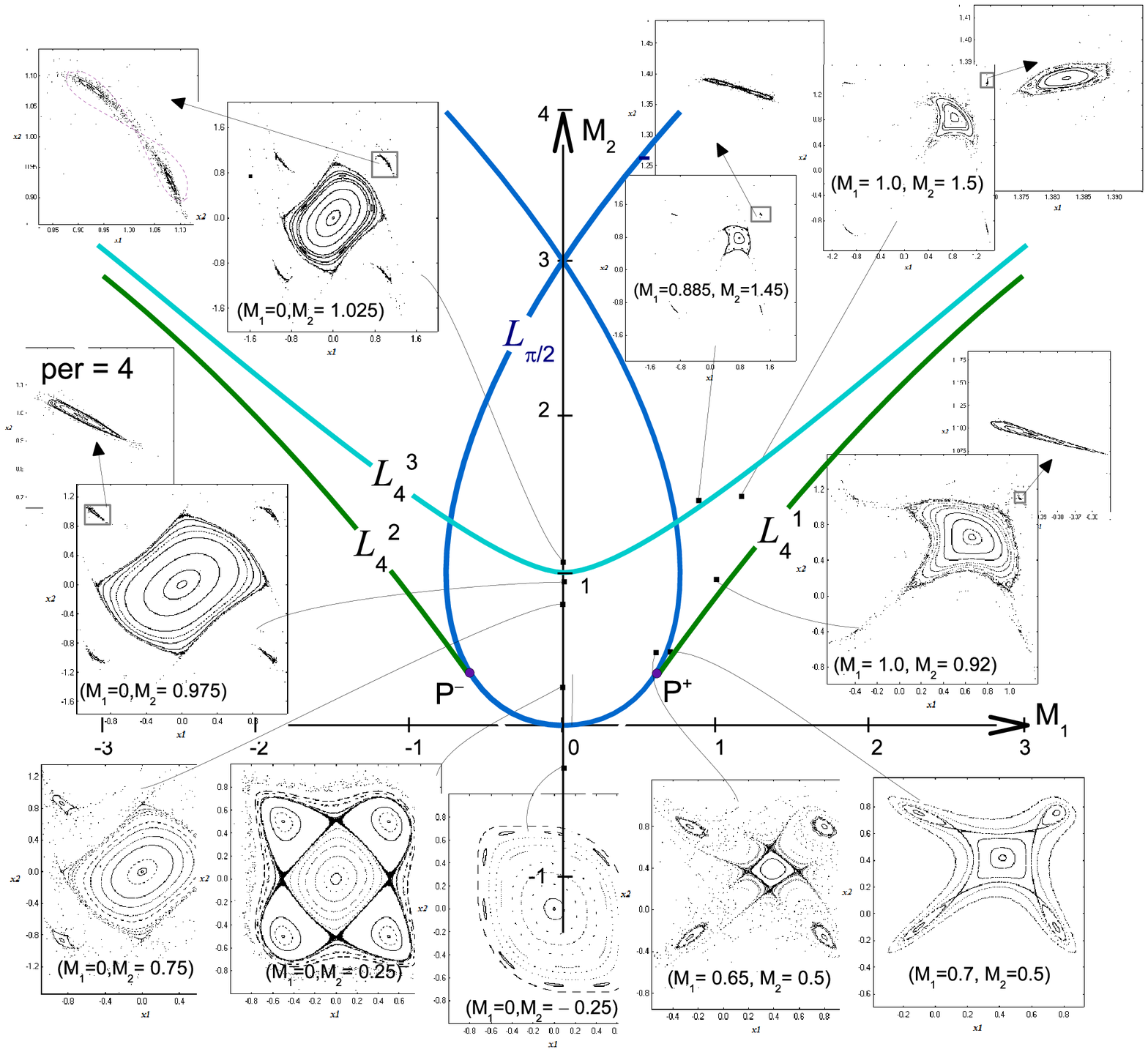}
%
\caption{{ Some elements of the bifurcation diagram for 1:4 resonance in the case of map~$\mathbf{C}_-^{\;1}$.}}
\label{fig:res1_4_chm2}
\end{figure}

Concerning map~$\mathbf{C}_-^{\;1}$,  the main elements of the bifurcation diagram related to 1:4 resonance are shown 
in Figure~\ref{fig:res1_4_chm2}.  Here
the 4 main bifurcation curves,~$L_{\pi/2}^-$, $L_4^1, L_4^2$ and~$L_4^3$, are shown in the~$(M_1,M_2)$-parameter plane. 
The equation for~$L_{\pi/2}^-$ is given by~(\ref{eq:res-}), the remaining curves have the following equations
\begin{equation}
\label{p4-0}
\begin{array}{l}
\displaystyle  L_4^{1,2} :\;     M_1 = \pm \frac{2}{3 \sqrt{3}} (1 + M_2)^{3/2},\;  M_2 > 1/3,  \\
\displaystyle  L_4^3 :\;    M_1 = \pm\frac{2}{3 \sqrt{3}} (2 + M_2) \sqrt{M_2 - 1},  \\
\end{array}
\end{equation}

Curves~(\ref{p4-0}) correspond to nondegenerate parabolic bifurcations for~$R$-symmetric 4-periodic orbits. 
The birth of a pair of saddle and elliptic 4-periodic orbits occurs at upward crossing these curves.
Curves~$L_4^1$ and~$L_4^2$ are (quadratically) tangent to curve~$L_{\pi/2}^-$ at the points~$P^\pm$,
see Figure~\ref{fig:res1_4_chm2}. In principle, bifurcations of map~$\mathbf{C}_-^{\;1}$, when~$M_1$ and~$M_2$ vary 
near~$P^\pm$, look similar to the bifurcations in the flow normal form~(\ref{2p5_2}), compare Figures~\ref{fig_pi2-A1} 
and~\ref{fig:res1_4_chm2} (in the latter figure, such bifurcations are displayed for the case of point~$P^+$). 
The presence of curve~$L_4^3$
shows
that the structure of 1:4 resonance in the case of map~$\mathbf{C}_-^{\;1}$ is not trivial that can be imagined 
from Figure~\ref{fig:res1_4_chm2}, where the main elements of the corresponding bifurcation diagram are shown.

\end{document}